\renewcommand*\l@section{\@dottedtocline{1}{1.5em}{2.3em}}
\theoremstyle{plain}
\newtheorem{theorem}{Theorem}
\newtheorem{proposition}[theorem]{Proposition}
\newtheorem{lemma}[theorem]{Lemma}
\newtheorem{example}[theorem]{Example}
\newtheorem{corollary}[theorem]{Corollary}
\theoremstyle{definition}
\newtheorem{definition}{Definition}
\newtheorem{notation}{Notation}
\newtheoremstyle{myrem}
 {3pt}
 {3pt}
 {\normalsize}
 { }
 {\itshape}
 {:}
 { }
 {}
 \theoremstyle{myrem}
 \newtheorem{remark}{Remark}
 \appto\remark{\leftskip\parindent}
 \appto\remark{\rightskip\parindent}
\numberwithin{equation}{section}
\numberwithin{theorem}{section}
\begin{document}

\begin{center}
{\Large {\textbf {
 Discrete  Differential Calculus on   Simplicial Complexes  and  Constrained  Homology
}}}
 \vspace{0.58cm}\\

Shiquan Ren

\smallskip

\begin{quote}
\begin{abstract}
Let $V$ be a finite set.   Let $\mathcal{K}$ be a  simplicial complex with its vertices in $V$.   In this paper,  we discuss some differential calculus on  $V$.   We   construct some  constrained  homology groups  of $\mathcal{K}$ by using the differential calculus on $V$.  Moreover,  we  define  a  independent hypergraph  to  be  the complement   of a  simplicial complex   in the complete hypergraph on $V$.    Let $\mathcal{L}$ be a   independent hypergraph with its vertices in $V$.    We   construct some  constrained  cohomology groups  of $\mathcal{L}$ by using the differential calculus on $V$.
\end{abstract}

\smallskip

{ {\bf 2010 Mathematics Subject Classification.}  	Primary  55U10,  	55U15,  Secondary  	53A45,  08A50
 }

{{\bf Keywords and Phrases.}   simplicial complexes,  hypergraphs,   (co-)chain complexes,  (co-)homology,  differential calculus  }

\end{quote}

\end{center}

\smallskip

\section{Introduction}

Simplicial  complexes  play  an important and fundamental  role in algebraic topology.
 So far,  topologists have developed the homology and cohomology theory for simplicial complexes.  We refer to \cite[Chapter~1]{eat}  and  \cite[Section~2.1]{hatcher}  for a   systematic  introduction  to  the   simplicial homology theory.  We also   refer to \cite[Section~42, Chapter~5]{eat}  and \cite[Section~3.1 and Section~3.2]{hatcher}  for an introduction to  the  simplicial cohomology theory.    On  the other hand,  since 1950's,  topologists have developed the simplicial homotopy theory (for example,   we  may  refer  to  \cite{curtis, semi1,go, semi2, jiewu2}),   which has  been found to have significant  applications in various topics in algebraic and geometric topology (for example,  we refer to   \cite{cohen,jiewu1, wu5} for  some  of  such applications).     In  simplicial homotopy theory,  simplicial complexes  are   the   fundamental   models  for  simplicial sets.

 The notion of  hypergraphs  is  a higher  dimensional generalization of  the notion of graphs  (cf.  \cite{berge, parks}).   In a graph,  an edge consists of two vertices while in an  oriented hypergraph,  a oriented hyperedge is allowed   to be  consisted of $n$-vertices  for  any $n\geq 1$.  From a topological point of view,  an oriented  hypergraph can be obtained by deleting some non-maximal faces  in  an  oriented  simplicial complex   (cf. \cite{h1, parks})  while a oriented simplicial  complex  is a special  oriented hypergraph with no non-maximal faces missing.  The embedded homology of  hypergraphs was  introduced by  Stephane Bressan,  Jingyan  Li,  Shiquan  Ren  and Jie Wu     \cite{h1}.  The embedded homology of  oriented  hypergraphs was  proved to be  independent  on the choice of orientations  by Jelena  Grbi\'{c},  Jie  Wu,  Kelin Xia    and Guo-Wei Wei   \cite[Theorem 2.7]{bio1}.

   The  complete hypergraph  $\Delta[V]$  on  a finite set $V$  has its set of the hyperedges as  all the non-empty subsets of $V$ (cf.  Definition~\ref{def-4}).   A  simplicial complex with   all  of  its vertices in $V$ has its set of the simplices as a subset of $\Delta[V]$.   We  call  the complement of   the set of the simplices in $\Delta[V]$    a complement  hypergraph  (cf.  Definition~\ref{def-6}  and  Proposition~\ref{pr-x-2.1.1}).

Differential calculus  is an important tool in  (co)homology theory.    In  some textbooks   in  algebraic topology (for example,  \cite{botu,madsen}),     the methods  of differential calculus  have been  applied  to   the (co)homology theory of  differentiable manifolds and fibre bundles.  During  the  1990s,   A. Dimakis  and   F. M\"{u}ller-Hoissen  \cite{d1,d2,d3}   initiated the study of  discrete differential calculus  on discrete sets  with  a motivation from  theoretical physics.  During the 2010s,   based on the study of  \cite{d1,d2,d3},   Alexander Grigor'yan,  Yong  Lin  and   Shing-Tung Yau  \cite{lin1},   Alexander Grigor'yan,  Yong  Lin, Yuri Muranov and  Shing-Tung Yau \cite{lin2,lin3,lin4}  and   Alexander Grigor'yan,   Yuri Muranov and  Shing-Tung Yau \cite{lin5,lin6}  developed the  discrete differential calculus  methods on  discrete sets and applied  the methods  to the study of digraphs.

\smallskip

In this paper,  we apply  the method of the (discrete) differential calculus   and give  some  constrained   homology    for simplicial complexes as  well as constrained cohomology for independent hypergraphs.   The  constrained cohomology of  independent hypergraphs  that will be  introduced  in  this paper is in general different from the embedded homology of hypergraphs in \cite{h1} and the embedded cohomology of hypergraphs  in \cite{bio1}.

 Let $V$  be a finite set.   Let $\mathcal{K}$  be a simplicial complex whose  set of   vertices is  a   subset of  $V$.  Let $n\geq 0$.     Let $v_0v_1\ldots v_n$  be an $n$-simplex of $\mathcal{K}$.
The usual boundary operator   (cf.  \cite[p. 105]{hatcher},  \cite[p.  28]{eat})   is given by
\begin{eqnarray}\label{eq-usual-boundary}
\partial_n(v_0v_1\ldots v_n)=\sum_{i=0}^n(-1)^i   v_0\ldots \widehat{v_i}\ldots v_n.  \end{eqnarray}
We  generalize the usual boundary operator
  and define  a weighted boundary operator
\begin{eqnarray*}
\frac{\partial}{\partial v} (v_0v_1 \ldots v_n)=\sum_{i=0}^n (-1)^i  \delta(v,v_i) v_0\ldots \widehat{v_i}\ldots v_n
\end{eqnarray*}
with respect to any  fixed vertex $v\in V$.   Note that
\begin{eqnarray*}
\partial_n=\sum_{v\in V}\frac{\partial}{\partial v}.
\end{eqnarray*}
We  take the exterior algebra ${\rm Ext}_*(V)$  generated by  the $\frac{\partial}{\partial v}$'s  for all  $v\in V$.  We  prove in  Subsection~\ref{ss4.2}  that for any  $t\geq 0$ and any $\alpha\in {\rm Ext}_{2t+1}(V)$,  there is a constrained homology group of $\mathcal{K}$  with respect to $\alpha$.  Moreover,   we  prove in Theorem~\ref{th-4.1h} that  for any $s\geq 0$  and any  $\beta\in {\rm Ext}_{2s}(V)$,  the element $\beta$  induces a homomorphism between the corresponding constrained homology groups.

 We   point out  that  the constrained homology groups   which will be investigated in  Subsection~\ref{ss4.2}  are    generalizations of the weighted homology groups investigated by  Robert. J. MacG. Dawson \cite{1990}  and Chengyuan  Wu,   Shiquan  Ren,  Jie  Wu  and Kelin  Xia \cite{chengyuan1,chengyuan2,chengyuan3} for weighted simplicial complexes.  Let $f$  be a real function on $V$.  We take  $t=1$ and
 \begin{eqnarray*}
 \alpha= \sum_{v\in V} f(v) \frac{\partial}{\partial  v}
 \end{eqnarray*}
 in Definition~\ref{def-4.11},  Subsection~\ref{ss4.2}.
 Then the constrained homology groups of  the  simplicial complex $\mathcal{K}$  with respect to $\alpha$,  which will be   investigated in  Subsection~\ref{ss4.2},    give    the weighted homology groups of the weighted simplicial complex $(\mathcal{K},f)$   which  have been  investigated  in \cite{chengyuan1,chengyuan2,chengyuan3}.

On  the other hand,  Let $\mathcal{L}$  be a independent hypergraph whose set of    vertices is a  subset of  $V$.  For any $v\in V$,  we  consider the adjoint linear map  $dv$   of the element $\frac{\partial}{\partial v}$ in   ${\rm Ext}_*(V)$.  We   define   ${\rm Ext}^*(V)$   as the exterior algebra  generated  by the $dv$'s  for all $v\in V$.   We  prove in  Subsection~\ref{ss4.3} that for any  $t\geq 0$ and any $\omega\in {\rm Ext}^{2t+1}(V)$,  there is a constrained  cohomology group of $\mathcal{L}$  with respect to $\omega$.   Moreover,    we  prove in Theorem~\ref{th-4.2h}  that  for any $s\geq 0$  and any  $\mu\in {\rm Ext}^{2s}(V)$,  the element $\mu$  induces a homomorphism between the  constrained  cohomology groups.

\smallskip

The remaining part of this paper is organized as follows.  In  Section~\ref{s2},  we  introduce the definitions of  hypergraphs,  simplicial complexes and independent hypergraphs.  In  Section~\ref{s3},   as   a   preparation for Section~\ref{s4},  we  discuss some differential calculus  for paths on discrete sets.  In Section~\ref{s4},   we  define the constrained homology groups for simplicial complexes in Definition~\ref{def-4.11}   and define the constrained  cohomology groups for  independent hypergraphs in Definition~\ref{def-4.22}.  We prove Theorem~\ref{th-4.1h}  and Theorem~\ref{th-4.2h}.  Finally,  in Section~\ref{s5},  we  give some examples for Section~\ref{s4}.

\smallskip

\section{Hypergraphs,  Simplicial Complexes,  and Independent Hypergraphs}\label{s2}

Let  $V$  be  a   discrete  set whose elements are called {\it vertices}.   Let  $n\geq 0$  be a non-negative  integer. Let  $S_{n+1}$  be the symmetric group on $n$-letters.  Then  $S_{n+1}$ acts  on the set  of all the sequences $v_0v_1\ldots v_n$,   where  $v_0,v_1,\ldots,v_n\in V$,       by  permuting the orders of the vertices.

\begin{definition}
An  {\it oriented $n$-hyperedge} is an  equivalent class $[v_0,v_1,\ldots,v_n]$  where  the equivalence relation $\sim$  on the  set    $\{v_0v_1\ldots v_n\mid  v_0,v_1,\ldots,v_n\in V \}$  of  the sequences  is  given  by  $\sigma(v_0v_1\ldots v_n)\sim v_0v_1\ldots v_n$  iff  $\sigma\in S_n$  is an even  permutation.
\end{definition}

In the remaining part of this  paper,  suppose $V$  has a total order $\prec$.   

\begin{definition}\label{def1}
An {\it  $n$-hyperedge}   on $V$  is a sequence
\begin{eqnarray}\label{eq-2.1}
\sigma^{(n)}= v_0v_1\ldots v_n
\end{eqnarray}
where $v_0\prec  v_1\prec \cdots \prec  v_n$  are vertices in $V$.  For simplicity,  an    $n$-hyperedge  is also called    a {\it hyperedge}  and  $\sigma^{(n)}$ in (\ref{eq-2.1})  is also denoted as $\sigma$.
\end{definition}

\begin{remark}
By  Definition~\ref{def1},  a $0$-hyperedge  on $V$   is   just  a  single vertex  $v_0$  in $V$  and  a $1$-hyperedge on $V$  is just an edge $v_0v_1$  in the complete graph  on $V$.
\end{remark}

\begin{definition}\label{def-zxa}
The  {\it complete $n$-uniform  hypergraph}  $\Delta_n(V)$  on $V$  is the collection of all the possible $n$-hyperedges on $V$.   In  other words,  $\Delta[V]$ consists of all the   subsets  of $V$  with  $n$-vertices:
\begin{eqnarray*}
\Delta_n(V)=\{v_0v_1\ldots v_n \mid  v_0,  v_1, \cdots,  v_n\in  V {\rm~and~}   v_0\prec  v_1\prec \cdots \prec  v_n\}.
\end{eqnarray*}
\end{definition}

\begin{remark}
In particular,  let  $n=2$  in  Definition~\ref{def-zxa}.  Then  the   complete $2$-uniform  hypergraph   $\Delta_2(V)$  is just  the complete graph on $V$.
\end{remark}

\begin{definition}\label{def-2}
An {\it $n$-uniform hypergraph }   $\mathcal{H}^{(n)}$  on $V$  is a collection of some of the  $n$-hyperedges on $V$.  In  other words,   $\mathcal{H}^{(n)}$ consists of some of  the   subsets  of $V$  with $n$-vertices:
\begin{eqnarray*}
\mathcal{H}^{(n)}\subseteq \{v_0v_1\ldots v_n \mid  v_0,  v_1, \cdots,  v_n\in  V {\rm~and~}   v_0\prec  v_1\prec \cdots \prec  v_n\}.
\end{eqnarray*}
\end{definition}

\begin{definition}
A  {\it hypergraph}  on $V$ is a disjoint union
\begin{eqnarray}\label{eq-def0}
\mathcal{H}= \bigcup _{n\geq 0} \mathcal{H}^{(n)}
\end{eqnarray}
where   $\mathcal{H}^{(n)}$  is an  $n$-uniform hypergraph on $V$ for each $n\geq 0$.
\end{definition}

\begin{definition}\label{def-4}
 The  {\it complete hypergraph}  $\Delta[V]$  on $V$  is the collection of all the possible hyperedges on $V$.   In  other words,  $\Delta[V]$ consists of all the non-empty finite  subsets  of $V$.
 \end{definition}

 \begin{remark}
 It is direct that we  have a disjoint union
 \begin{eqnarray*}
 \Delta[V]= \bigcup_{n\geq  0} \Delta_n(V).
 \end{eqnarray*}
 \end{remark}

 \begin{definition}\label{def-xx}
 Let  $\mathcal{H}_1$ and  $\mathcal{H}_2$  be two  hypergraphs on $V$.  The  {\it complement}  of  $\mathcal{H}_1$  in $\mathcal{H}_2$  is defined to  be  a hypergraph $\mathcal{H}_2\setminus\mathcal{H}_1$ on $V$    by
 \begin{eqnarray*}
 \mathcal{H}_2\setminus\mathcal{H}_1=\{\sigma {\rm ~is~a~hyperedge~on~}V\mid \sigma\in \mathcal{H}_2{\rm~and~} \sigma\notin \mathcal{H}_1\}.
 \end{eqnarray*}
 \end{definition}

\begin{definition}\label{def-5}
A  {\it simplicial complex} (pl. {\it simplicial  complexes})  $\mathcal{K}$  on $V$  is a hypergraph on $V$ such that for any hyperedge $\sigma\in \mathcal{K}$  and any non-empty subset $\tau\subseteq\sigma$,  we always have $\tau\in \mathcal{K}$.   An      $n$-hyperedge in a simplicial complex  is   also   called an  {\it     $n$-simplex} (pl. {\it      $n$-simplices})  or  simply an      {\it       simplex} (pl. {\it       simplices}).
\end{definition}

\begin{definition}\label{def-6}
A  {\it independent hypergraph} 
    $\mathcal{L}$  on $V$  is a hypergraph on $V$ such that for any hyperedge $\sigma\in \mathcal{L}$  and any  hyperedge  $\tau$  on $V$  satisfying $\sigma\subseteq\tau$,  we always have $\tau\in \mathcal{L}$.
    \footnote{
    The reason that we use the term "independent hypergraph"  is as follows.  
By  Proposition~\ref{pr-x-2.1.1},  the set of    hyperedges of an  independent hypergraph  $\mathcal{L}$ on $V$
 is  the complement   of  the set of  simplices of a simplicial complex $\mathcal{K}$ on $V$  in the set of  hyperedges of the complete hypergraph $\Delta[V]$.  That  is, $\mathcal{K}=\Delta[V]\setminus \mathcal{L}$,  or equivalently,  $\mathcal{L}=\Delta[V]\setminus \mathcal{K}$.   If we regard each simplex $\sigma\in \mathcal{K}$ as a relation on $V$,  then the vertices of each hyperedge   $\sigma\in \mathcal{L}$ are independent  from  the relations in $\mathcal{K}$.   Since   $\mathcal{L}$ consists of all the hyperedges $\sigma\in \Delta[V]$ such that the  vertices  of each hyperedge are independent  from  the relations in $\mathcal{K}$,  we call $\mathcal{L}$  an  independent  hypergraph. 
 }
   
\end{definition}

 \begin{remark}
 From Definition~\ref{def-2},  Definition~\ref{def-5}  and Definition~\ref{def-6}, it  is direct that
\begin{itemize}
\item
for any $n\geq 1$,  an  $n$-uniform hypergraph  is not a simplicial complex;
\item
for any $n\leq \# V-1$  where $\# V$  is the cardinality of $V$ (here  $\# V$    can  be  either finite or infinite),   an $n$-uniform hypergraph  is not a independent hypergraph.
\end{itemize}
 \end{remark}

 \begin{remark}\label{remark-3}
  From Definition~\ref{def-4},  Definition~\ref{def-5}  and Definition~\ref{def-6}, it  is direct that
the complete hypergraph $\Delta[V]$  is a simplicial  complex  on $V$  and also  a independent hypergraph  on $V$.
 \end{remark}

 \begin{proposition}\label{pr-x-2.1.1}
Let $\Delta[V]$  be the complete hypergraph  on $V$.  Let $\mathcal{K}$  be a simplicial complex on $V$.   Let $\mathcal{L}$  be a independent hypergraph on $V$.  Then  both the followings are satisfied:
 \begin{enumerate}[(i).]
 \item
  $\Delta[V]\setminus\mathcal{K}$  is a independent hypergraph on $V$;
 \item
   $\Delta[V]\setminus\mathcal{L}$  is a simplicial complex on $V$.
 \end{enumerate}
 \end{proposition}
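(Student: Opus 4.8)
The plan is to prove both statements by the same contrapositive argument, exploiting the fact that the two defining closure conditions are dual under complementation in $\Delta[V]$: a simplicial complex (Definition~\ref{def-5}) is closed under passing to non-empty subsets, while a co-simplicial complex (Definition~\ref{def-6}) is closed under passing to hyperedge supersets, and complementation interchanges these two directions.

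For part (i), I would begin with an arbitrary $\sigma\in\Delta[V]\setminus\mathcal{K}$ and an arbitrary hyperedge $\tau$ on $V$ with $\sigma\subseteq\tau$, and aim to show $\tau\in\Delta[V]\setminus\mathcal{K}$. Since $\tau$ is a non-empty finite subset of $V$, it lies in $\Delta[V]$ automatically by Definition~\ref{def-4}, so the only thing to verify is $\tau\notin\mathcal{K}$. I would argue by contradiction: if $\tau\in\mathcal{K}$, then since $\sigma$ is a non-empty subset of $\tau$ and $\mathcal{K}$ is a simplicial complex, Definition~\ref{def-5} would force $\sigma\in\mathcal{K}$, contradicting $\sigma\notin\mathcal{K}$. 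Hence $\tau\in\Delta[V]\setminus\mathcal{K}$, and by Definition~\ref{def-6} the complement is a co-simplicial complex.

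For part (ii), I would run the symmetric argument. Take $\sigma\in\Delta[V]\setminus\mathcal{L}$ and any non-empty subset $\tau\subseteq\sigma$; then $\tau$ is itself a hyperedge, so $\tau\in\Delta[V]$, and it remains only to show $\tau\notin\mathcal{L}$. If instead $\tau\in\mathcal{L}$, then since $\tau\subseteq\sigma$ and $\mathcal{L}$ is a co-simplicial complex, Definition~\ref{def-6} would force $\sigma\in\mathcal{L}$, contradicting $\sigma\notin\mathcal{L}$. Thus $\tau\in\Delta[V]\setminus\mathcal{L}$, and by Definition~\ref{def-5} the complement is a simplicial complex.

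There is no genuine obstacle here; the content is a straightforward duality, and the two halves are mirror images of each other. The only point requiring a little care is the bookkeeping of non-emptiness: one must recall that a hyperedge is by definition a non-empty finite subset of $V$, so that the subset $\tau$ appearing in each case is again a legitimate hyperedge and the membership $\tau\in\Delta[V]$ holds without extra work. This is precisely what makes the two closure conditions line up as exact contrapositives of one another.
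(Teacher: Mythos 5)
Your proposal is correct and follows essentially the same route as the paper: in each part, take $\sigma$ in the complement and the relevant $\tau$, assume for contradiction that $\tau\in\mathcal{K}$ (resp.\ $\tau\in\mathcal{L}$), and use the defining closure property to force $\sigma$ back into $\mathcal{K}$ (resp.\ $\mathcal{L}$), a contradiction. Your extra remark that $\tau$ automatically lies in $\Delta[V]$ is a harmless bookkeeping point the paper leaves implicit.
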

 \begin{proof}
 (i).  Let $\sigma\in\Delta[V]\setminus\mathcal{K}$.   Let  $\tau$  be a hyperedge on $V$  such that $\sigma\subseteq\tau$.  In order to prove that   $\Delta[V]\setminus\mathcal{K}$  is a independent hypergraph,  it suffices to prove $\tau\in \Delta[V]\setminus\mathcal{K}$.  Suppose to the contrary,  $\tau\notin \Delta[V]\setminus\mathcal{K}$.  Then $\tau\in \mathcal{K}$.  Since $\mathcal{K}$  is a simplicial complex and $\sigma\subseteq\tau$,  we  have $\sigma\in \mathcal{K}$.  This contradicts  $\sigma\in\Delta[V]\setminus\mathcal{K}$.  Therefore,  $\tau\in \Delta[V]\setminus\mathcal{K}$,  which implies that  $\Delta[V]\setminus\mathcal{K}$  is a independent hypergraph.

 (ii).   Let $\sigma\in  \Delta[V]\setminus\mathcal{L}$.  Let  $\tau$  be a hyperedge on $V$  such that $\tau\subseteq\sigma$.  In order to prove that   $\Delta[V]\setminus\mathcal{L}$  is a  simplicial complex,  it suffices to prove $\tau\in \Delta[V]\setminus\mathcal{L}$.  Suppose to the contrary,  $\tau\notin \Delta[V]\setminus\mathcal{L}$.  Then $\tau\in \mathcal{L}$.  Since $\mathcal{L}$  is a  independent hypergraph and $\tau\subseteq\sigma$,  we  have $\sigma\in \mathcal{L}$.  This contradicts  $\sigma\in\Delta[V]\setminus\mathcal{L}$.   Therefore,  $\tau\in \Delta[V]\setminus\mathcal{L}$,  which implies that  $\Delta[V]\setminus\mathcal{L}$  is a simplicial  complex.
 \end{proof}

 \smallskip

 \begin{example}Consider the set $V=\{v_0,v_1,v_2,v_3,v_4,v_5\}$.  Then
 \begin{enumerate}[(i).]
 \item
 $\sigma^{(3)}=v_0v_2v_4v_5$  is $3$-hyperedge on $V$;

 \item
 $\mathcal{H}^{(2)}=\{v_0v_2v_3, v_1v_2v_3,  v_1v_3v_5,  v_2v_4v_5\}$   is a $2$-uniform  hypergraph on $V$;

 \item
 $\mathcal{H}=\{v_0,  v_0v_1, v_4v_5,  v_0v_1v_2, v_2v_3v_4v_5\}$  is a hypergraph on $V$;

 \item
 $\Delta[V]=\{v_i\mid  0\leq i\leq 5\}\cup \{v_iv_j\mid  0\leq i<j\leq 5\} \cup \{v_iv_jv_k\mid  0\leq i<j<k\leq 5\}\cup \{v_iv_jv_kv_l\mid  0\leq i<j<k<l\leq 5\}\cup \{v_iv_jv_kv_lv_s\mid  0\leq i<j<k<l<s\leq 5\} \cup\{v_0v_1v_2v_3v_4v_5\}$;

 \item
 $\mathcal{K}=\{v_0, v_0v_1, v_0v_2, v_1v_2, v_0v_1v_2\}$  is  a simplicial  complex  on $V$;

 \item
 $\mathcal{L}=\{v_0v_1v_2 v_4,  v_0v_1v_2v_3v_5,    v_0v_1v_2v_3v_4, v_0v_1v_2v_4v_5, v_0v_1v_2v_3v_4v_5\}$   is a independent hypergraph  on  $V$.

 \end{enumerate}
 \end{example}

 \begin{example}
 Consider  the set  $V=\mathbb{Z}$  of all the integers.  Then
 \begin{enumerate}[(i).]
 \item
 for any $p\in\mathbb{Z}$ and any $q\geq 0$,    the sequence $p(p+1)\ldots (p+q)$  of  subsequent integers  is a $q$-hyperedge on $V$;

 \item
 for  any $q\geq 0$,  the collection  $\mathcal{H}^{(q)}=\{p(p+1)\ldots (p+q)\mid  p\equiv 1 ~({\rm mod~} 3)\}$  of sequences  of  subsequent integers  is a $q$-uniform  hypergraph on $V$;

 \item
 the collection $\mathcal{H} =\{p(p+1)\ldots (p+q)\mid  p\equiv 1 ~({\rm mod~} 3){\rm~and~} 2\leq q\leq 5\}$  of sequences  of  subsequent integers  is a hypergraph on $V$;

 \item
 $\Delta[V]=\{ i_0\in \mathbb{Z}\} \cup  \{i_0i_1\in  \mathbb{Z}\times \mathbb{Z}\mid  i_0<i_1\}\cup   \{i_0i_1i_2\in  \mathbb{Z}\times \mathbb{Z}\times\mathbb{Z}\mid  i_0<i_1<i_2\}\cup\cdots$;

 \item
 the collection $\mathcal{K}=\{p(p+1)\ldots (p+q)\mid p\in\mathbb{Z} {\rm ~and~}  0\leq q\leq 5\}$   of  sequences  of  subsequent integers  is a simplicial complex on $V$;

 \item
  the collection $\mathcal{L}=\{p(p+1)\ldots (p+q)\mid  p\in\mathbb{Z} {\rm ~and~}  q>5 \}$   of  sequences  is a independent hypergraph on $V$.

 \end{enumerate}
 \end{example}

 \smallskip

 \section{Differential Calculus for Paths on Discrete Sets}\label{s3}

In  this section,  we review the definitions of the paths and the elementary paths  on  a discrete set  (cf.  \cite{lin2}).   By applying some  discrete differential  calculus,   we  construct certain chain complexes   and  co-chain  complexes for  the space of paths on a discrete set.

\smallskip

\subsection{Paths on Discrete Sets}

Throughout this section,  we  let  $V$  be a  discrete set.
Let $n\geq 0$  be a non-negative integer.

\begin{definition}(cf.  \cite[Definition~2.1]{lin2}).
An {\it elementary $n$-path}  on $V$  is an ordered sequence $v_0v_1\ldots v_n$ of $n+1$ vertices in  $V$.  Here   for any integers $0\leq i<j\leq n$,   we do not require $v_i\prec v_j$, $v_j\prec v_i$ or  $v_i\neq v_j$.
\end{definition}

\begin{definition}(cf.  \cite[Definition~2.2]{lin2}).
  A formal linear combination of elementary $n$-paths on $V$ with coefficients in the real numbers  $\mathbb{R}$  is called an {\it $n$-path}  on $V$.
  \end{definition}

  \begin{notation}(cf.  \cite[Subsection~2.1]{lin2}).
  Denote by $\Lambda_n(V)$  the vector space of all $n$-paths.   Then any element in $\Lambda_n(V)$   is of the form
\begin{eqnarray*}
\sum_{v_0,v_1,\ldots,v_n\in V}{r_{v_0v_1\ldots v_n}v_0v_1\ldots v_n},~~~r_{v_0v_1\ldots v_n}\in \mathbb{R}.
\end{eqnarray*}
\end{notation}
\begin{notation}
Letting $n$ run over all non-negative integers,  we have a graded vector space
\begin{eqnarray*}
\Lambda_*(V)=\bigoplus_{n=0}^\infty \Lambda_n(V).
\end{eqnarray*}
\end{notation}

\begin{notation}
 For each $n\geq  0$,  we have a canonical  inner product
 \begin{eqnarray*}
\langle~,~\rangle:~~~  \Lambda_n(V)\times \Lambda_n(V)\longrightarrow \mathbb{R}
\end{eqnarray*}
on $\Lambda_n(V)$  by
\begin{eqnarray}\label{eq-1.2}
\langle u_0u_1\ldots u_n,v_0v_1\ldots v_n\rangle =\prod_{i=0}^n  \delta(u_i,v_i).
\end{eqnarray}
\end{notation}

\begin{remark}
It follows from (\ref{eq-1.2}) that
\begin{itemize}
\item
  if $u_0u_1\ldots u_n$  and $v_0v_1\ldots v_n$  are identically the same  elementary $n$-path,  then
\begin{eqnarray*}
\langle u_0u_1\ldots u_n,v_0v_1\ldots v_n\rangle=1;
\end{eqnarray*}
\item
if $u_0u_1\ldots u_n$  and $v_0v_1\ldots v_n$  are not the same  elementary $n$-path,  then
  \begin{eqnarray*}
  \langle u_0u_1\ldots u_n,v_0v_1\ldots v_n\rangle=0.
  \end{eqnarray*}
    \end{itemize}
    \end{remark}

\smallskip

\subsection{Partial Derivatives on Path Spaces}

\begin{definition} \label{def-13}
For any $v\in V$,  we define the {\it partial derivative} of $\Lambda_*(V)$ with respect to $v$ to be a sequence of   linear maps
\begin{eqnarray*}
\frac{\partial}{\partial v}: ~~~ \Lambda_n(V)\longrightarrow \Lambda_{n-1}(V),~~~
n\geq 0
\end{eqnarray*}
by letting
\begin{eqnarray}\label{eq-1.1}
\frac{\partial}{\partial v} (v_0v_1 \ldots v_n)=\sum_{i=0}^n (-1)^i  \delta(v,v_i) v_0\ldots \widehat{v_i}\ldots v_n.
\end{eqnarray}
 Here  in (\ref{eq-1.1}),   for  any  vertices  $u,v\in V$,  we  use the notation  $\delta(u,v)=1$  if $u=v$  and  $\delta(u,v)=0$  if  $u\neq v$.   We extend (\ref{eq-1.1})   linearly  over $\mathbb{R}$.
 \end{definition}

  \begin{remark}
By Definition~\ref{def-13},  for any distinct vertices  $v_0,v_1,\ldots,v_n$  in $V$  we  have the followings:
\begin{itemize}
\item
if $v_i=v$ for some $0\leq i\leq n$,  then
\begin{eqnarray*}
\frac{\partial}{\partial v} (v_0 v_1\ldots v_n)=(-1)^i   v_0\ldots \widehat{v_i}\ldots v_n;
\end{eqnarray*}
\item
if  $v_i\neq v$  for any $0\leq i\leq n$,   then
\begin{eqnarray*}
\frac{\partial}{\partial v} (v_0v_1,\ldots v_n)=0.
\end{eqnarray*}
\end{itemize}
\end{remark}

\begin{lemma}(\cite[Lemma~2.7]{sid}).  \label{le-0.0}
For any $u,v\in V$,  we  have
\begin{eqnarray}\label{eq-1.0}
\frac{\partial}{\partial u}\circ\frac{\partial}{\partial v}=-\frac{\partial}{\partial v}\circ\frac{\partial}{\partial u}.
\end{eqnarray}
\end{lemma}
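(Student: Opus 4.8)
The plan is to verify the identity (\ref{eq-1.0}) on an arbitrary elementary $n$-path $v_0v_1\ldots v_n$; since both sides are linear maps $\Lambda_n(V)\to\Lambda_{n-2}(V)$ and the elementary paths span $\Lambda_n(V)$, this suffices. First I would apply $\frac{\partial}{\partial v}$ using (\ref{eq-1.1}) to obtain the signed sum of $(n-1)$-paths $v_0\ldots\widehat{v_i}\ldots v_n$ with coefficients $(-1)^i\delta(v,v_i)$, and then apply $\frac{\partial}{\partial u}$ to each summand. The essential piece of bookkeeping is the renumbering caused by the first deletion: inside $v_0\ldots\widehat{v_i}\ldots v_n$ a vertex $v_j$ with $j<i$ still occupies position $j$, while a vertex $v_j$ with $j>i$ has been shifted to position $j-1$. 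Hence the second derivative contributes $(-1)^j\delta(u,v_j)$ when $j<i$ and $(-1)^{j-1}\delta(u,v_j)$ when $j>i$.

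Carrying this out expresses $\frac{\partial}{\partial u}\circ\frac{\partial}{\partial v}(v_0\ldots v_n)$ as a double sum over pairs $(i,j)$, which I would split into the blocks $j<i$ and $j>i$. Every surviving summand is an $(n-2)$-path $v_0\ldots\widehat{v_a}\ldots\widehat{v_b}\ldots v_n$ obtained by deleting two positions $a<b$, and it carries either the condition $\delta(u,v_a)\delta(v,v_b)=1$ or the condition $\delta(v,v_a)\delta(u,v_b)=1$. Repeating the same computation with the roles of $u$ and $v$ interchanged gives the analogous expansion of $\frac{\partial}{\partial v}\circ\frac{\partial}{\partial u}(v_0\ldots v_n)$, and I would then compare the two expansions term by term, indexed by the deleted pair $\{a,b\}$ together with which of $u,v$ occupies the smaller position.

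The crux is the comparison of signs. For a fixed deleted pair $a<b$ with $v_a=u$ and $v_b=v$: in $\frac{\partial}{\partial u}\circ\frac{\partial}{\partial v}$ the first-applied $\frac{\partial}{\partial v}$ removes position $b$ with sign $(-1)^b$ and the subsequent $\frac{\partial}{\partial u}$ removes the unshifted position $a$ with sign $(-1)^a$, for a total sign $(-1)^{a+b}$; in $\frac{\partial}{\partial v}\circ\frac{\partial}{\partial u}$ the first-applied $\frac{\partial}{\partial u}$ removes position $a$ with sign $(-1)^a$, after which position $b$ has dropped to $b-1$, so the subsequent $\frac{\partial}{\partial v}$ contributes sign $(-1)^{b-1}$, for a total $(-1)^{a+b-1}$. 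These two totals differ by exactly the factor $-1$. The symmetric computation disposes of the pairs with $v_a=v$ and $v_b=u$ in the same manner, and the diagonal case $u=v$ is subsumed automatically, yielding the nilpotency $\big(\frac{\partial}{\partial v}\big)^2=0$ as a by-product. Summing over all deleted pairs then gives $\frac{\partial}{\partial u}\circ\frac{\partial}{\partial v}=-\frac{\partial}{\partial v}\circ\frac{\partial}{\partial u}$. I expect the only genuine obstacle to be keeping the $(-1)^j$ versus $(-1)^{j-1}$ distinction straight: this off-by-one in the exponent, forced by the renumbering after the first deletion, is precisely what produces the anticommuting sign, and it is the easiest thing to mishandle.
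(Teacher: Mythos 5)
Your proposal is correct and follows essentially the same route as the paper's proof: both verify the identity on an elementary $n$-path, expand the composition as a double sum split into the blocks $j<i$ and $j>i$, and extract the anticommuting sign from the off-by-one index shift caused by the first deletion. No substantive difference.
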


\begin{proof}
Since both $\frac{\partial}{\partial u}$  and $\frac{\partial}{\partial v}$  are  linear,  it follows that both  $\frac{\partial}{\partial u}\circ\frac{\partial}{\partial v}$ and $\frac{\partial}{\partial v}\circ\frac{\partial}{\partial u}$
 are  linear as well.  Hence in order to prove the identity (\ref{eq-1.0}) as linear maps from $\Lambda_n(V;R)$ to $\Lambda_{n-1}(V;R)$,   we only need to verify the identity (\ref{eq-1.0}) on an elementary $n$-path $v_0v_1\ldots v_n$.   By  the definition (\ref{eq-1.1}),  we have
\begin{eqnarray*}
\frac{\partial}{\partial u}\circ\frac{\partial}{\partial v}(v_0v_1\ldots v_n)&=&\frac{\partial}{\partial u}\Big(\sum_{j=0}^n (-1)^j\delta(v,v_j)v_0 \ldots \widehat{v_j} \ldots v_n\Big)\\
&=&\sum_{j=0}^n (-1)^j\delta(v,v_j)\frac{\partial}{\partial u}(v_0 \ldots \widehat{v_j} \ldots v_n)\\
&=&\sum_{j=0}^n (-1)^j\delta(v,v_j)\sum_{i=0}^{j-1} (-1)^i \delta(u,v_i)(v_0 \ldots \widehat{v_i}\ldots \widehat{v_j} \ldots v_n)\\
&& +\sum_{j=0}^n(-1)^j\delta(v,v_j)\sum_{i=j+1}^{n} (-1)^{i-1} \delta(u,v_i)(v_0 \ldots \widehat{v_i}\ldots \widehat{v_j} \ldots v_n)\\
&=&\sum_{0\leq i<j\leq n}(-1)^{i+j}\delta(u,v_i)\delta(v,v_j)(v_0 \ldots \widehat{v_i}\ldots \widehat{v_j} \ldots v_n)\\
&&+ \sum_{0\leq j<i\leq n}(-1)^{i+j-1}\delta(u,v_i)\delta(v,v_j)(v_0 \ldots \widehat{v_j}\ldots \widehat{v_i} \ldots v_n).
\end{eqnarray*}
Similarly,
\begin{eqnarray*}
\frac{\partial}{\partial v}\circ\frac{\partial}{\partial u}(v_0v_1\ldots v_n)&=&
\sum_{0\leq j<i\leq n}(-1)^{i+j}\delta(u,v_i)\delta(v,v_j)(v_0 \ldots \widehat{v_i}\ldots \widehat{v_j} \ldots v_n)\\
&&+ \sum_{0\leq i<j\leq n}(-1)^{i+j-1}\delta(u,v_i)\delta(v,v_j)(v_0 \ldots \widehat{v_j}\ldots \widehat{v_i} \ldots v_n).
\end{eqnarray*}
Therefore,  for any elementary $n$-path $v_0v_1\ldots v_n$  on $V$,  we have
\begin{eqnarray*}
\frac{\partial}{\partial u}\circ\frac{\partial}{\partial v}(v_0v_1\ldots v_n)+\frac{\partial}{\partial v}\circ\frac{\partial}{\partial u}(v_0v_1\ldots v_n)=0.
\end{eqnarray*}
Consequently, by the   linear property of $\frac{\partial}{\partial u}\circ\frac{\partial}{\partial v}$ and $\frac{\partial}{\partial v}\circ\frac{\partial}{\partial u}$,  we   obtain (\ref{eq-1.0}).
\end{proof}

\begin{notation}
We denote $\frac{\partial}{\partial v}\circ\frac{\partial}{\partial u}$ as $\frac{\partial}{\partial v}\wedge \frac{\partial}{\partial u}$ for any $u,v\in V$.
\end{notation}

\begin{definition}
We consider the exterior algebra
\begin{eqnarray*}
{\rm Ext}_*(V)= \bigwedge\Big(\frac{\partial}{\partial v}\mid v\in V\Big)
\end{eqnarray*}
and call it the {\it differential algebra} on $V$.
\end{definition}

We have the following observations:
\begin{itemize}
\item
The differential algebra  ${\rm Ext}_*(V)$  is a direct sum
\begin{eqnarray*}
{\rm Ext}_*(V)=\bigoplus_{k=0}^\infty {\rm Ext}_k(V);
\end{eqnarray*}
\item
 ${\rm Ext}_0(V)=\mathbb{R}$ while   for each $k\geq 1$,  the space ${\rm Ext}_k(V)$  is the vector space spanned by all the following elements
\begin{eqnarray*}
\frac{\partial}{\partial v_1}\wedge \frac{\partial}{\partial v_2}\wedge \cdots \wedge\frac{\partial}{\partial v_k},~~~ v_1,v_2,\ldots, v_k\in V
\end{eqnarray*}
modulo the relation
\begin{eqnarray*}
\frac{\partial}{\partial v_1}\wedge \cdots \wedge \frac{\partial}{\partial v_i}\wedge \frac{\partial}{\partial v_{i+1}}\wedge \cdots  \wedge\frac{\partial}{\partial v_k}=-\frac{\partial}{\partial v_1}\wedge \cdots \wedge \frac{\partial}{\partial v_{i+1}}\wedge \frac{\partial}{\partial v_{i}}\wedge \cdots  \wedge\frac{\partial}{\partial v_k}
\end{eqnarray*}
for any $1\leq i\leq k-1$;
\item
  The exterior product
\begin{eqnarray*}
\wedge:~~~  {\rm Ext}_k(V)\times {\rm Ext}_l(V)\longrightarrow {\rm Ext}_{k+l}(V), ~~~ k,l\geq 1,
\end{eqnarray*}
 is   the composition of linear maps.  It   is  given by
\begin{eqnarray*}
(\frac{\partial}{\partial v_1}\wedge \cdots \wedge\frac{\partial}{\partial v_k})\wedge (\frac{\partial}{\partial u_1}\wedge \cdots \wedge\frac{\partial}{\partial u_l})=\frac{\partial}{\partial v_1}\wedge \cdots \wedge\frac{\partial}{\partial v_k}\wedge \frac{\partial}{\partial u_1}\wedge \cdots \wedge\frac{\partial}{\partial u_l}
\end{eqnarray*}
which  extends bi-linearly over $\mathbb{R}$.

 \item
  For any $k\geq 1$  and any  $\alpha \in {\rm Ext}_k(V)$,  we have     that  $\alpha$ gives a sequence of  linear maps
\begin{eqnarray}\label{eq-2.cc}
\alpha_n:~~~ \Lambda_n(V)\longrightarrow \Lambda_{n-k}(V),  ~~~ n\geq 0.
\end{eqnarray}
Here we adopt the notation that $\Lambda_{-n}(V)=0$  for any $n\geq 0$.   Precisely,  if  we write
\begin{eqnarray*}
\alpha=\sum_{v_1,v_2,\ldots, v_k\in V} r_{v_1,v_2,\ldots, v_k}\frac{\partial}{\partial v_1}\wedge \frac{\partial}{\partial v_2}\wedge \cdots \wedge \frac{\partial}{\partial v_k},~~~ r_{v_1,v_2,\ldots, v_k}\in\mathbb{R},
\end{eqnarray*}
then for any elementary $n$-path $u_0u_1\ldots u_n$  on $V$ with  $n\geq k$,  we have
 \footnote[2]{The   expression of $\alpha(u_0u_1\ldots u_n)$ follows from the following two  observations:
\begin{enumerate}[(i).]
\item
for any    $0\leq i_1<i_2<\ldots < i_k \leq n$,  by  applying  (\ref{eq-1.1})  for $k$-times,  we  have
\begin{eqnarray*}
\frac{\partial}{\partial u_{i_1}}\wedge \frac{\partial}{\partial u_{i_2}}\wedge \cdots \wedge \frac{\partial}{\partial u_{i_k}}(u_0u_1\ldots u_n)=(-1)^{i_1+i_2+\cdots+i_k} u_0 \ldots \widehat{u_{i_1}}\ldots  \widehat{u_{i_2}}\ldots  \widehat{u_{i_k}}\ldots u_n;
\end{eqnarray*}
\item
for any $\sigma\in S_k$,  by applying (\ref{eq-1.0})   iteratively,  we  have
\begin{eqnarray*}
\frac{\partial}{\partial u_{i_{\sigma(1)}}}\wedge \frac{\partial}{\partial u_{i_{\sigma(2)}}}\wedge \cdots \wedge \frac{\partial}{\partial u_{i_{\sigma(k)}}}= {\rm sgn}(\sigma)\frac{\partial}{\partial u_{i_1}}\wedge \frac{\partial}{\partial u_{i_2}}\wedge \cdots \wedge \frac{\partial}{\partial u_{i_k}}.
\end{eqnarray*}
\end{enumerate}
}
\begin{eqnarray*}
\alpha(u_0u_1\ldots u_n)&=&\sum_{v_1,v_2,\ldots, v_k\in V} r_{v_1,v_2,\ldots, v_k}\frac{\partial}{\partial v_1}\wedge \frac{\partial}{\partial v_2}\wedge \cdots \wedge \frac{\partial}{\partial v_k}(u_0u_1\ldots u_n) \\
&=&\sum_{0\leq i_1<i_2<\ldots <i_k\leq n ~}\sum_{\sigma\in S_k} r_{u_{i_{\sigma(1)}}, u_{i_{\sigma(2)}}, \ldots, u_{i_{\sigma(k)}}}  {\rm sgn}(\sigma)\\
&&{\rm ~~~~~~~~~~~~~~~~~~~~~~~~~~}(-1)^{i_1+i_2+\cdots+i_k} u_0 \ldots \widehat{u_{i_1}}\ldots  \widehat{u_{i_2}}\ldots  \widehat{u_{i_k}}\ldots u_n.
\end{eqnarray*}
Here $S_k$  is the permutation group on $k$-letters   and   for any permutation $\sigma\in S_k$,  we use ${\rm sgn}(\sigma)$  to denote  the signature of $\sigma$.
\end{itemize}

\smallskip

\subsection{Partial Differentiations on Path Spaces}

      \begin{definition}
   For any $v\in V$,  we define the {\it partial differentiation}  $dv$  with respect to $v$  to be a sequence of  linear maps
\begin{eqnarray*}
d v:  ~~~  \Lambda_{n}(V)\longrightarrow \Lambda_{n+1}(V), ~~~n\geq 0,
\end{eqnarray*}
such that $dv$  is the adjoint linear map of $\frac{\partial}{\partial v}$  for each $n\geq 0$.  Precisely,   for any   $n\geq 0$,  any $\xi\in \Lambda_n(V)$,   and any $\eta\in \Lambda_{n+1}(V)$,   we  have
\begin{eqnarray}\label{eq-2.in}
\Big\langle  \frac{\partial}{\partial v}(\eta),\xi\Big\rangle =\langle \eta, dv(\xi)\rangle.
\end{eqnarray}
\end{definition}

The next lemma gives an explicit formula for $dv$.

\begin{lemma}(\cite[Lemma~2.10]{sid}).
 \label{le-x}
For any $n\geq 1$,  any $v\in V$,  and any elementary $(n-1)$-path $u_0u_1\ldots u_{n-1}$  on $V$,  we  have
\begin{eqnarray} \label{eq-2.x5}
dv (u_0u_1\ldots u_{n-1})=\sum_{i=0}^n (-1)^i u_0u_1\ldots u_{i-1} v u_i u_{i+1}\ldots u_{n-1}.
\end{eqnarray}
\end{lemma}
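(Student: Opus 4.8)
The plan is to use the defining adjoint property (\ref{eq-2.in}) directly, exploiting the fact that the inner product (\ref{eq-1.2}) makes the elementary $n$-paths into an orthonormal basis of $\Lambda_n(V)$ and is real and symmetric. Since $dv$ is uniquely determined by its matrix coefficients in this basis, it suffices to show that the right-hand side of (\ref{eq-2.x5}), which I will abbreviate as $D(u_0\ldots u_{n-1})$, obeys $\langle \frac{\partial}{\partial v}(\eta),\, u_0\ldots u_{n-1}\rangle = \langle \eta,\, D(u_0\ldots u_{n-1})\rangle$ for every elementary $n$-path $\eta=w_0w_1\ldots w_n$. Equivalently, by orthonormality I would expand $dv(u_0\ldots u_{n-1})=\sum_{w_0\ldots w_n}\langle \frac{\partial}{\partial v}(w_0\ldots w_n),\, u_0\ldots u_{n-1}\rangle\, w_0\ldots w_n$, using symmetry of the pairing together with (\ref{eq-2.in}) to rewrite each coefficient, and then identify the result with $D(u_0\ldots u_{n-1})$.

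First I would compute the left-hand coefficient. Applying (\ref{eq-1.1}) gives $\frac{\partial}{\partial v}(w_0\ldots w_n)=\sum_{j=0}^n (-1)^j\delta(v,w_j)\, w_0\ldots\widehat{w_j}\ldots w_n$, and pairing with $u_0\ldots u_{n-1}$ through (\ref{eq-1.2}) selects exactly those indices $j$ for which $w_j=v$ and the deleted path $w_0\ldots\widehat{w_j}\ldots w_n$ is identically $u_0\ldots u_{n-1}$, each contributing the sign $(-1)^j$. On the other side, reading off the claimed formula shows that the coefficient of $w_0\ldots w_n$ in $D(u_0\ldots u_{n-1})$ is the signed count of slots $i$ at which inserting $v$ into $u_0\ldots u_{n-1}$ reproduces $w_0\ldots w_n$, again weighted by $(-1)^i$.

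The crux is then a position-matching argument. I would show that deleting the letter $w_j=v$ from $w_0\ldots w_n$ to recover $u_0\ldots u_{n-1}$, and inserting $v$ into slot $i$ of $u_0\ldots u_{n-1}$ to recover $w_0\ldots w_n$, are inverse operations with $i=j$, so that the two index sets coincide and the signs $(-1)^i$ and $(-1)^j$ agree term by term. Concretely, the identity $w=u_0\ldots u_{i-1}\,v\,u_i\ldots u_{n-1}$ holds precisely when $w_i=v$ and deleting $w_i$ returns $u_0\ldots u_{n-1}$; this equivalence is merely a relabeling once one writes out $w_0=u_0,\ldots,w_{i-1}=u_{i-1},\,w_i=v,\,w_{i+1}=u_i,\ldots,w_n=u_{n-1}$.

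I expect the only genuine care to lie in the bookkeeping of this index shift and in confirming that the matching stays a bijection even when $v$ coincides with some $u_i$ or when vertices repeat, which the elementary-path setting permits. In that degenerate case a single $n$-path $w$ can arise from several insertion slots, but the corresponding deletion positions are exactly the same slots, so the signed counts on the two sides remain identically indexed by the same set and nothing beyond this needs tracking. With the bijection and the sign matching established, the adjoint identity (\ref{eq-2.in}) holds for $D$, and by uniqueness of the adjoint map we conclude $dv=D$, which is precisely (\ref{eq-2.x5}).
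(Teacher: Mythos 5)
Your proposal is correct and follows essentially the same route as the paper: both expand $dv(u_0\ldots u_{n-1})$ in the orthonormal basis of elementary $n$-paths, compute each coefficient via the adjoint relation $\langle \frac{\partial}{\partial v}(\eta),\xi\rangle=\langle\eta,dv(\xi)\rangle$ together with formula (\ref{eq-1.1}), and then identify the result with the insertion formula. The paper carries out the final identification by collapsing an explicit product of Kronecker deltas summed over all $v_0,\ldots,v_n$, which is just the mechanical version of your deletion--insertion position-matching argument, including the repeated-vertex case you flag.
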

\begin{proof}
In (\ref{eq-2.in}),  we take $\eta$  to be an elementary $n$-path $v_0v_1\ldots v_n\in \Lambda_n(V)$  and  take  $\xi$  to be an elementary $(n-1)$-path $u_0u_1\ldots u_{n-1}\in \Lambda_{n-1}(V)$.  Then
\begin{eqnarray*}
\langle v_0v_1\ldots v_n,  dv (u_0u_1\ldots u_{n-1})\rangle &=&  \langle \frac{\partial}{\partial v}(v_0v_1\ldots v_n),   u_0u_1\ldots u_{n-1} \rangle\\
&=&\langle \sum _{i=0}^n (-1)^i \delta(v,v_i)v_0\ldots \widehat{v_i}\ldots v_n, u_0u_1\ldots u_{n-1}\rangle\\
&=&\sum _{i=0}^n (-1)^i \delta(v,v_i)\prod_{j=0}^{i-1}\delta(v_j,u_j) \prod _{j=i}^{n-1}\delta(v_{j+1},u_{j}).
\end{eqnarray*}
Consequently,  we  have
\begin{eqnarray}
dv (u_0u_1\ldots u_{n-1})&=&\sum_{v_0,v_1,\ldots, v_n\in V}  \langle v_0v_1\ldots v_n, dv (u_0u_1\ldots u_{n-1}) \rangle  v_0v_1\ldots v_n\nonumber\\
&=&\sum_{v_0,v_1,\ldots, v_n\in V}\Big(\sum _{i=0}^n (-1)^i \delta(v,v_i)\prod_{j=0}^{i-1}\delta(v_j,u_j) \prod _{j=i}^{n-1}\delta(v_{j+1},u_{j})\Big) v_0v_1\ldots v_n\nonumber\\
&=&\sum_{i=0}^n (-1)^i \Big( \sum_{v_0,v_1,\ldots, v_n\in V} \delta(v,v_i)\prod_{j=0}^{i-1}\delta(v_j,u_j) \prod _{j=i}^{n-1}\delta(v_{j+1},u_{j})\Big)v_0v_1\ldots v_n\nonumber\\
&=&\sum_{i=0}^n (-1)^i u_0u_1\ldots u_{i-1} v u_i u_{i+1}\ldots u_{n-1}.
\end{eqnarray}
We obtain (\ref{eq-2.x5}).
\end{proof}

The  next corollary gives the case $n=1$  in Lemma~\ref{le-x}.

\begin{corollary}
For any $u,v\in  V$  we  have $dv(u)=vu-uv$.
\qed
\end{corollary}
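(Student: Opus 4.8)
The plan is to obtain the corollary as the special case $n=1$ of Lemma~\ref{le-x}. Recall that an elementary $0$-path on $V$ is simply a single vertex; writing $u=u_0$, I would substitute $n=1$ directly into formula (\ref{eq-2.x5}).

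With $n=1$ the sum in (\ref{eq-2.x5}) runs over $i=0,1$ and produces exactly two terms. For $i=0$ the vertex $v$ is inserted before $u_0$, so the prefix $u_0\ldots u_{i-1}$ is empty and the term is $(-1)^0 v u_0 = vu$. For $i=1$ the vertex $v$ is inserted after $u_0$, so the suffix $u_i\ldots u_{n-1}$ is empty and the term is $(-1)^1 u_0 v = -uv$. Summing the two terms yields $dv(u)=vu-uv$, which is the assertion.

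Because the result is a direct substitution into an already-proved formula, there is no real obstacle. The only subtlety worth flagging is the reading of the generic expression $u_0u_1\ldots u_{i-1}\,v\,u_iu_{i+1}\ldots u_{n-1}$ in this degenerate length-one case, where one of the two flanking blocks of vertices is empty; once that convention is fixed, the two surviving terms are read off immediately. Alternatively, one could bypass the lemma and verify $dv(u)=vu-uv$ directly from the adjoint relation (\ref{eq-2.in}) by pairing against an arbitrary elementary $1$-path, but specializing Lemma~\ref{le-x} is the shortest route.
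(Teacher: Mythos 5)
Your proof is correct and matches the paper's intent exactly: the paper states this corollary as the case $n=1$ of Lemma~\ref{le-x} and gives no further argument, and your substitution of $n=1$ into (\ref{eq-2.x5}), yielding the two terms $vu$ and $-uv$, is precisely the intended verification.
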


Similar with  the proof of Lemma~\ref{le-0.0},    it is direct to verify  the next lemma.

\begin{lemma}(\cite[Lemma~2.7]{sid}).
 \label{le-2.xy}
For any $u,v\in V$   we   have
\begin{eqnarray}\label{eq-2.88}
du \circ  dv=- dv\circ  du.
\end{eqnarray}
\end{lemma}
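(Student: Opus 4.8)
The plan is to avoid expanding the explicit insertion formula (\ref{eq-2.x5}) and instead to transport the sign identity (\ref{eq-1.0}) from the partial derivatives to their adjoints. The guiding principle is that forming adjoints reverses the order of composition, so the anticommutativity of the $\frac{\partial}{\partial v}$'s established in Lemma~\ref{le-0.0} should pass verbatim to the $dv$'s. This is cleaner than the direct computation suggested by the stated analogy with the proof of Lemma~\ref{le-0.0}.

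Concretely, I would fix $n\geq 0$, an elementary $n$-path $\xi\in\Lambda_n(V)$ and an elementary $(n+2)$-path $\eta\in\Lambda_{n+2}(V)$, and evaluate $\langle \eta, du\circ dv(\xi)\rangle$ by peeling the two adjoints off one at a time using the defining relation (\ref{eq-2.in}). Moving $du$ across the inner product first yields $\langle \frac{\partial}{\partial u}(\eta), dv(\xi)\rangle$, and moving $dv$ across as well yields $\langle \frac{\partial}{\partial v}\circ\frac{\partial}{\partial u}(\eta), \xi\rangle$. At this point Lemma~\ref{le-0.0} replaces the inner operator by $-\frac{\partial}{\partial u}\circ\frac{\partial}{\partial v}$, giving $-\langle \frac{\partial}{\partial u}\circ\frac{\partial}{\partial v}(\eta), \xi\rangle$. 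Running the two adjoint moves in reverse order then returns this to $-\langle \eta, dv\circ du(\xi)\rangle$. The net outcome is $\langle \eta, du\circ dv(\xi)\rangle=\langle \eta, -\,dv\circ du(\xi)\rangle$ for every elementary $(n+2)$-path $\eta$.

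To finish, I would invoke nondegeneracy of the pairing: by the Remark following (\ref{eq-1.2}) the elementary $(n+2)$-paths form an orthonormal family, so any element of $\Lambda_{n+2}(V)$ is recovered from its inner products against them (its coefficient on $\eta$ is exactly $\langle \eta,\cdot\rangle$), even when $V$ is infinite since every element is a finite combination. Hence $du\circ dv(\xi)=-\,dv\circ du(\xi)$ on each elementary path $\xi$, and by linearity of both composites the identity (\ref{eq-2.88}) holds on all of $\Lambda_*(V)$.

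The one point that genuinely needs care, rather than being pure bookkeeping, is this last nondegeneracy step: one must record that the elementary paths are an orthonormal basis so that equality of all inner products $\langle\eta,\cdot\rangle$ forces equality of elements. The remaining content is only degree tracking through $\Lambda_n(V)\to\Lambda_{n+1}(V)\to\Lambda_{n+2}(V)$ across the two uses of (\ref{eq-2.in}); no sign chase over index ranges and no appeal to (\ref{eq-2.x5}) is required, which is precisely why the adjoint route is shorter than the direct verification.
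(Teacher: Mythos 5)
Your argument is correct, but it is not the route the paper takes in the main body of the proof: there the author expands $du\circ dv$ and $dv\circ du$ on an elementary $n$-path directly via the insertion formula (\ref{eq-2.x5}) and cancels the resulting double sums term by term, mirroring the computation in Lemma~\ref{le-0.0}. Your adjoint-transport argument --- peel both differentials across the pairing using (\ref{eq-2.in}), apply Lemma~\ref{le-0.0} to the composite $\frac{\partial}{\partial v}\circ\frac{\partial}{\partial u}$, and push back --- is in fact exactly the alternative proof the author records in the footnote attached to this lemma, so the paper itself endorses your route as valid. The two approaches trade different things: the direct expansion is self-contained at the level of formulas and, as the footnote notes, serves to ``consolidate'' (i.e.\ exercise and double-check) the explicit expression (\ref{eq-2.x5}) from Lemma~\ref{le-x}, whereas your version is shorter, avoids any sign chase over index ranges, and isolates the one genuinely load-bearing step, namely the nondegeneracy of the pairing on $\Lambda_{n+2}(V)$. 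You handle that step correctly: the elementary paths are orthonormal by the remark following (\ref{eq-1.2}), every element of $\Lambda_{n+2}(V)$ is a finite combination of them, and $du\circ dv(\xi)$ is such a finite combination, so agreement of all inner products $\langle\eta,\cdot\rangle$ against elementary $\eta$ does force equality of the two operators on each elementary $\xi$, and linearity finishes the argument. The only cosmetic caveat is that your phrase ``running the two adjoint moves in reverse order'' is just the two applications of (\ref{eq-2.in}) read right-to-left, which is how the footnote writes it as a single chain of equalities; there is no gap there.
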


\begin{proof}
\footnote[3]{An  alternative proof for Lemma~\ref{le-2.xy} follows from Lemma~\ref{le-0.0}  directly:  Let $u,v\in V$.  For any  any  $n\geq 0$,  any $\xi\in \Lambda_n(V)$    and any $\eta\in \Lambda_{n+2}(V)$,  we  have
\begin{eqnarray*}
&&  \langle    \eta , du\wedge  dv(\xi)\rangle= \Big\langle   \frac{\partial}{\partial u} (\eta), dv(\xi)\Big\rangle=\Big\langle \frac{\partial}{\partial v}\wedge \frac{\partial}{\partial u} (\eta),\xi\Big\rangle\\
&=&-\Big\langle \frac{\partial}{\partial u}\wedge \frac{\partial}{\partial v} (\eta),\xi\Big\rangle =-\Big\langle  \frac{\partial}{\partial v} (\eta),  du(\xi)\Big\rangle = -\langle \eta,  dv \wedge du (\xi)\rangle.
\end{eqnarray*}
    This implies (\ref{eq-2.88}).   Nevertheless,  the proof for Lemma~\ref{le-2.xy}  in the main-body  consolidates (\ref{eq-2.x5})  in Lemma~\ref{le-x}.
 }
For any $n\geq 0$ and any elementary $n$-path $v_0v_1\ldots v_n\in \Lambda_n(V)$,   by  (\ref{eq-2.x5}),   we have
\begin{eqnarray*}
du \circ  dv(v_0v_1\ldots v_n)&=&du\Big(\sum_{i=0}^{n+1} (-1)^i v_0\ldots v_{i-1} v v_i\ldots v_n\Big)\\
&=&\sum_{i=0}^{n+1} (-1)^i du(v_0\ldots v_{i-1} v v_i\ldots v_n)\\
&=&\sum_{i=0}^{n+1} (-1)^i \Big(\sum_{j=0}^{i-1}(-1)^j v_0\ldots v_{j-1}  u v_j \ldots v_{i-1} v  v_i \ldots v_n\\
&&+(-1)^i v_0\ldots v_{i-1}uvv_i\ldots v_n + (-1)^{i+1}v_0\ldots v_{i-1} v u v_i\ldots v_n\\
&&+  \sum_{j=i+1}^{n+1} (-1)^{j+1} v_0\ldots v_{i-1} v v_i \ldots v_{j-1} u v_j\ldots v_n\Big)
\end{eqnarray*}
while
\begin{eqnarray*}
dv \circ  du(v_0v_1\ldots v_n)&=&\sum_{i=0}^{n+1}(-1)^i \Big(\sum_{j=0}^{i-1}(-1)^j v_0\ldots v_{j-1}  v v_j \ldots v_{i-1} u  v_i \ldots v_n\\
&&+(-1)^i v_0\ldots v_{i-1}vuv_i\ldots v_n + (-1)^{i+1}v_0\ldots v_{i-1} u v v_i\ldots v_n\\
&&+  \sum_{j=i+1}^{n+1} (-1)^{j+1} v_0\ldots v_{i-1} u v_i \ldots v_{j-1} v v_j\ldots v_n\Big).
\end{eqnarray*}
Thus
\begin{eqnarray*}
du \circ dv(v_0v_1\ldots v_n)=
-dv \circ  du(v_0v_1\ldots v_n)
\end{eqnarray*}
 for any $n\geq 0$ and any elementary $n$-path $v_0v_1\ldots v_n\in \Lambda_n(V)$.   Consequently,  we obtain     (\ref{eq-2.88}).
 \end{proof}

\begin{notation}
We  denote  $d u\circ dv $  as $du \wedge dv$  for any $u,v\in V$.
\end{notation}

\begin{definition}
We  consider the exterior algebra
\begin{eqnarray*}
{\rm Ext}^*(V)= \bigwedge\Big(dv \mid v\in V\Big)
\end{eqnarray*}
and call it the {\it  co-differential algebra}  on $V$.
\end{definition}

We  have the following  observations:

\begin{itemize}
\item
   ${\rm Ext}^*(V)$  is a direct sum
\begin{eqnarray*}
{\rm Ext}^*(V)=\bigoplus_{k=0}^\infty {\rm Ext}^k(V).
\end{eqnarray*}
\item
${\rm Ext}^0(V)=\mathbb{R}$ while  for each $k\geq 1$,  the space  ${\rm Ext}^k(V)$  is spanned by
\begin{eqnarray*}
d v_1 \wedge d v_2\wedge \cdots \wedge d v_k,~~~ v_1,v_2,\ldots, v_k\in V
\end{eqnarray*}
modulo the relation
\begin{eqnarray*}
d v_1\wedge \cdots \wedge d v_i\wedge d v_{i+1}\wedge \cdots  \wedge d v_k=-dv_1\wedge \cdots \wedge d v_{i+1}\wedge d v_{i}\wedge \cdots  \wedge  d v_k
\end{eqnarray*}
for any $1\leq i\leq k-1$.

\item
  For any $k\geq 1$  and any  $\omega \in {\rm Ext}^k(V)$,  we have     that  $\omega$ gives a sequence of  linear maps
\begin{eqnarray}\label{eq-2.dd}
\omega_n:~~~ \Lambda_n(V)\longrightarrow \Lambda_{n+k}(V),  ~~~ n\geq 0.
\end{eqnarray}

\end{itemize}

\begin{definition}
Let $k\geq 1$,  $\alpha \in {\rm Ext}_k(V)$   and $\omega\in {\rm Ext}^k(V)$.  We  say that $\alpha$ and $\omega$ are {\it adjoint} to each other      if  for any $n\geq 0$,  any  $\xi\in \Lambda_n(V)$  and any $\eta\in \Lambda_{n+k}(V)$,  the identity
\begin{eqnarray*}
\langle \alpha(\eta),\xi \rangle= \langle \eta,  \omega(\xi)\rangle
\end{eqnarray*}
 is satisfied.
\end{definition}

\begin{proposition}\label{le-2.1}
Let $k\geq 1$  be any positive integer.
Let $\alpha\in {\rm Ext}_k(V)$ be  given by
\begin{eqnarray*}
\alpha=\sum_{v_1,v_2,\ldots, v_k\in V} r_{v_1,v_2,\ldots, v_k} \frac{\partial}{\partial v_1}\wedge  \frac{\partial}{\partial  v_2}\wedge \cdots \wedge\frac{\partial}{\partial  v_k},~~~ r_{v_1,v_2,\ldots, v_k}\in\mathbb{R}.
\end{eqnarray*}
Suppose $\omega\in {\rm Ext}^k(V)$ is adjoint to  $\alpha$.  Then $\omega$  is given by
\begin{eqnarray}\label{eq-adj}
\omega={\rm sgn}(k)\sum_{v_1,v_2,\ldots, v_k\in V} r_{v_1,v_2,\ldots, v_k} d v_1\wedge  d  v_2\wedge \cdots \wedge  d v_k,~~~ r_{v_1,v_2,\ldots, v_k}\in\mathbb{R}
\end{eqnarray}
 where ${\rm sgn}(k)=1$  if $k\equiv 0,1$  modulo  $4$  and ${\rm sgn}(k)=-1$  if $k\equiv 2,3$  modulo $4$.
\end{proposition}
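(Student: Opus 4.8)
The plan is to reduce to a single monomial by $\mathbb{R}$-linearity and then compute its adjoint directly from the adjoint relation for the individual partial derivatives. First I would invoke the notational conventions recording that the wedge products are literally compositions of linear maps, so that $\frac{\partial}{\partial v_1}\wedge\cdots\wedge\frac{\partial}{\partial v_k}=\frac{\partial}{\partial v_1}\circ\cdots\circ\frac{\partial}{\partial v_k}$ and $dv_1\wedge\cdots\wedge dv_k=dv_1\circ\cdots\circ dv_k$. Since $\alpha$ is a finite $\mathbb{R}$-linear combination of such monomials and the passage to the adjoint is $\mathbb{R}$-linear, it suffices to identify the adjoint of a single monomial $\frac{\partial}{\partial v_1}\wedge\cdots\wedge\frac{\partial}{\partial v_k}$ and then reassemble with the coefficients $r_{v_1,\ldots,v_k}$.

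Next I would use the elementary fact that taking the adjoint of a composition reverses the order of the factors: $(A\circ B)^{*}=B^{*}\circ A^{*}$, which is immediate from the defining relation $\langle (A\circ B)(\eta),\xi\rangle=\langle\eta,(A\circ B)^{*}(\xi)\rangle$ by inserting the intermediate vector $B(\eta)$ and applying the adjoint relation twice. Applying this with each factor $\frac{\partial}{\partial v_i}$, whose adjoint is $dv_i$ by (\ref{eq-2.in}), gives that the adjoint of $\frac{\partial}{\partial v_1}\circ\cdots\circ\frac{\partial}{\partial v_k}$ equals $dv_k\circ\cdots\circ dv_1=dv_k\wedge\cdots\wedge dv_1$, i.e. the same differentiations but written in the reversed order.

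The final step, which is where the factor ${\rm sgn}(k)$ is produced, is to restore the natural order using the anticommutativity of Lemma~\ref{le-2.xy}. Reversing $(v_k,\ldots,v_1)$ back to $(v_1,\ldots,v_k)$ is the reversal permutation on $k$ letters, whose number of inversions is $\binom{k}{2}$, so $dv_k\wedge\cdots\wedge dv_1=(-1)^{k(k-1)/2}\,dv_1\wedge\cdots\wedge dv_k$. The only point requiring care, and the main (if routine) obstacle, is checking that this sign equals ${\rm sgn}(k)$ as defined: reducing $k(k-1)/2$ modulo $2$ yields parities $0,0,1,1$ for $k\equiv 0,1,2,3\pmod 4$ respectively, so $(-1)^{k(k-1)/2}={\rm sgn}(k)$ exactly. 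Summing over all monomials with their coefficients then produces (\ref{eq-adj}); uniqueness of the resulting $\omega$ is guaranteed because the inner product (\ref{eq-1.2}) makes the elementary paths an orthonormal basis, so the adjoint of $\alpha$ is uniquely determined.
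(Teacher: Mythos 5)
Your proposal is correct and follows essentially the same route as the paper: peel off the factors $\frac{\partial}{\partial v_i}$ one at a time via the adjoint relation (\ref{eq-2.in}) to get the reversed product $dv_k\wedge\cdots\wedge dv_1$, then restore the order with the reversal permutation of signature $(-1)^{k(k-1)/2}={\rm sgn}(k)$. Your explicit remark that nondegeneracy of the inner product (\ref{eq-1.2}) forces uniqueness of the adjoint is a small but worthwhile addition, since the paper's proof only exhibits one $\omega$ adjoint to $\alpha$ while the statement asserts that every adjoint has this form.
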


\begin{proof}
Let  $n\geq 0$,     $\xi\in \Lambda_n(V)$,  and  $\eta\in \Lambda_{n+k}(V)$.  Then  we  have
\begin{eqnarray*}
\langle  \alpha(\eta),\xi\rangle &=&\sum_{v_1,v_2,\ldots, v_k\in V} r_{v_1,v_2,\ldots,v_k}\langle \frac{\partial}{\partial v_1}\wedge
\frac{\partial}{\partial v_2}\wedge\cdots \wedge \frac{\partial}{\partial v_k}(\eta),\xi\rangle\\
&=&\sum_{v_1,v_2,\ldots, v_k\in V} r_{v_1,v_2,\ldots,v_k}\langle
\frac{\partial}{\partial v_2}\wedge\cdots \wedge \frac{\partial}{\partial v_k}(\eta),dv_1(\xi)\rangle\\
&=&\sum_{v_1,v_2,\ldots, v_k\in V} r_{v_1,v_2,\ldots,v_k}\langle
 \frac{\partial}{\partial v_3}\wedge\cdots \wedge \frac{\partial}{\partial v_k}(\eta), dv_2\wedge dv_1(\xi)\rangle\\
~~\\
&=&\cdots \\
~~\\
&=&\sum_{v_1,v_2,\ldots, v_k\in V} r_{v_1,v_2,\ldots,v_k}\langle
  \eta,  dv_k\wedge dv_{k-1} \wedge \cdots \wedge dv_1(\xi)
\rangle\\
&=&\sum_{v_1,v_2,\ldots, v_k\in V} r_{v_1,v_2,\ldots,v_k}{\rm sgn}(k) \langle
  \eta,   dv_1\wedge dv_{2} \wedge \cdots \wedge dv_k(\xi)
\rangle.
\end{eqnarray*}
The last equality follows from the fact that the permutation  $(k,k-1,\ldots, 1)$   of $(1,2,\ldots,  k)$   has the  signature
\begin{eqnarray*}
{\rm sgn}{{ 1,2,\ldots,k }\choose{ k,k-1,\ldots, 1 }}=(-1)^{(k-1)+(k-2)+\cdots +1}=(-1)^{\frac{k(k-1)}{2}}
\end{eqnarray*}
  for any $k\geq 2$ and the permutation  $(k,k-1,\ldots, 1)$   of $(1,2,\ldots,  k)$   has the signature $1$ for $k=1$.  In other words,  the permutation  $(k,k-1,\ldots, 1)$   of $(1,2,\ldots,  k)$  has   the  signature       $1$ for
   $k\equiv 0,1$  (mod $4$)  and has the  signature  $-1$  for $k\equiv 2,3$  (mod $4$).
Therefore,  we have that the   $\omega$  given by (\ref{eq-adj})  is adjoint to $\alpha$.  The proposition  follows.
\end{proof}

\smallskip

\subsection{Some Chain Complexes   and Co-Chain Complexes on Path  Spaces}

\begin{proposition}\label{le-3.8888x}
Let $t\geq 0$  be a non-negative integer.
Let $\alpha\in {\rm Ext}_{2t+1}(V)$ and $\omega\in {\rm Ext}^{2t+1}(V)$.  Then for any $0\leq q\leq 2t$,    we  have a chain complex
\begin{eqnarray*}
\xymatrix{
\cdots\ar[r]^-{\alpha}
&\Lambda_{n(2t+1)+q}(V)\ar[r]^-{\alpha}
&\Lambda_{(n-1)(2t+1)+q}(V)\ar[r]^-{\alpha}
&\\
\cdots\ar[r]^-{\alpha}
&\Lambda_{(2t+1)+q}(V)\ar[r]^-\alpha
&\Lambda_{q}(V)\ar[r]^-\alpha
&0
}
\end{eqnarray*}
and  a co-chain complex
\begin{eqnarray*}
\xymatrix{
\cdots
&\Lambda_{n(2t+1)+q}(V)\ar[l]_-{\omega}
&\Lambda_{(n-1)(2t+1)+q}(V)\ar[l]_-{\omega}
&  \ar[l]_-{\omega}\\
\cdots&\Lambda_{(2t+1)+q}(V)\ar[l]_-{\omega}
&\Lambda_{q}(V)\ar[l]_-\omega
&\ar[l]_-\omega  0.
}
\end{eqnarray*}
\end{proposition}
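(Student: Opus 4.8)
The plan is to reduce the assertion to the two single identities $\alpha\circ\alpha=0$ and $\omega\circ\omega=0$, after which the two displayed sequences are a chain complex and a co-chain complex by definition. First I would record the degree bookkeeping: by (\ref{eq-2.cc}) the operator $\alpha\in T_{2t+1}(V)$ lowers the path degree by exactly $2t+1$, so it sends $\Lambda_{m(2t+1)+q}(V)$ into $\Lambda_{(m-1)(2t+1)+q}(V)$; in particular every space appearing in the first sequence lies in a single residue class modulo $2t+1$, and the terminal arrow $\Lambda_q(V)\xrightarrow{\alpha}0$ is forced because $q-(2t+1)<0$ when $q\le 2t$, so its target $\Lambda_{q-(2t+1)}(V)$ vanishes. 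Dually, by (\ref{eq-2.dd}) the operator $\omega\in T^{2t+1}(V)$ raises the degree by exactly $2t+1$, which fixes all the arrows of the co-chain complex.

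The heart of the proof is the square-to-zero property. Since the product $\wedge$ in $T_*(V)$ is by definition the composition of linear maps, it suffices to show $\alpha\wedge\alpha=0$. I would write $\alpha=\sum_I r_I\,\partial_I$ as a finite $\mathbb{R}$-linear combination of monomials $\partial_I=\frac{\partial}{\partial v_{i_1}}\wedge\cdots\wedge\frac{\partial}{\partial v_{i_{2t+1}}}$ of odd length $2t+1$. Iterating Lemma~\ref{le-0.0} shows that moving one such monomial past another of the same length costs the sign $(-1)^{(2t+1)^2}=-1$, that is, $\partial_I\wedge\partial_J=-\,\partial_J\wedge\partial_I$. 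Then
\begin{eqnarray*}
\alpha\wedge\alpha &=& \sum_{I,J} r_I r_J\,\partial_I\wedge\partial_J \\
&=& -\sum_{I,J} r_I r_J\,\partial_J\wedge\partial_I = -\,\alpha\wedge\alpha,
\end{eqnarray*}
where the last equality is merely a relabeling of the summation indices; as $\mathbb{R}$ has characteristic zero this forces $\alpha\wedge\alpha=0$, hence $\alpha\circ\alpha=0$. This is the step I expect to be the main (though not deep) obstacle: it must be argued for an arbitrary, possibly non-monomial, element of $T_{2t+1}(V)$ rather than for a single basis monomial, and it is precisely the antisymmetry of the double sum above that makes it go through.

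For the co-chain complex I would run the identical argument with Lemma~\ref{le-2.xy} in place of Lemma~\ref{le-0.0}, obtaining $\omega\wedge\omega=\omega\circ\omega=0$ for $\omega\in T^{2t+1}(V)$. Alternatively, when $\omega$ is the adjoint of $\alpha$ in the sense of Proposition~\ref{le-2.1}, one may deduce $\omega\circ\omega=0$ directly from $\alpha\circ\alpha=0$ by taking adjoints in the defining inner-product identity, since the adjoint of the zero operator is the zero operator. Combining the vanishing of consecutive compositions with the degree bookkeeping recorded above then yields both the chain complex and the co-chain complex exactly as displayed.
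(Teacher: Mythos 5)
Your proposal is correct and follows essentially the same route as the paper: the paper likewise reduces everything to the degree bookkeeping plus the graded-commutativity identity $\alpha\wedge\alpha=(-1)^{(2t+1)^2}\alpha\wedge\alpha$ (and its analogue for $\omega$), concluding $\alpha\circ\alpha=\omega\circ\omega=0$ since $(2t+1)^2$ is odd. The only difference is one of detail: where the paper simply invokes ``the anti-symmetric property of exterior algebras,'' you justify that property for arbitrary (non-monomial) elements by expanding in monomials, iterating Lemma~\ref{le-0.0} (resp.\ Lemma~\ref{le-2.xy}), and relabeling the double sum, which is a welcome elaboration but not a different argument.
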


\begin{proof}
Let $t\geq 0$.
Let $\alpha\in  {\rm Ext}_{2t+1}(V)$  and $\omega\in {\rm Ext}^{2t+1}(V)$.
Let $0\leq q\leq 2t$.
Note that for each $n\geq 0$,
the maps
\begin{eqnarray*}
\alpha:~~~\Lambda_{n(2t+1)+q}(V)\longrightarrow \Lambda_{(n-1)(2t+1)+q}(V)
\end{eqnarray*}
and
\begin{eqnarray*}
\omega:~~~\Lambda_{n(2t+1)+q}(V)\longrightarrow \Lambda_{(n+1)(2t+1)+q}(V)
\end{eqnarray*}
are well-defined.
By the anti-symmetric property of exterior algebras,
we  have
\begin{eqnarray*}
\alpha\wedge \alpha=(-1)^{(2t+1)^2} \alpha\wedge\alpha,~~~~~~  \omega\wedge\omega=(-1)^{(2t+1)^2}\omega\wedge\omega.
\end{eqnarray*}
Since $(2t+1)^2$  is odd,  we have
\begin{eqnarray*}
\alpha\circ\alpha=\alpha\wedge \alpha=0,~~~~~~ \omega\circ\omega=\omega\wedge\omega=0.
\end{eqnarray*}
  Thus for any $0\leq q\leq 2t$,  we have the chain complex  as well as  the co-chain complex as  given   in the proposition.
\end{proof}

\begin{notation}
Let   $0\leq q\leq 2t$.  Let $\alpha\in {\rm Ext}_{2t+1}(V)$ and $\omega\in {\rm Ext}^{2t+1}(V)$.    We adopt  the following  notations:
\begin{enumerate}[(i).]
\item
denote the chain complex  in Proposition~\ref{le-3.8888x}  as
\begin{eqnarray*}
\Lambda_*(V,\alpha,q)=\{\Lambda_{n(2t+1)+q}(V),\alpha\}_{n\geq 0};
\end{eqnarray*}
\item
 denote the co-chain complex  in Proposition~\ref{le-3.8888x}  as
\begin{eqnarray*}
\Lambda^*(V,\omega,q)=\{\Lambda_{n(2t+1)+q}(V),\omega\}_{n\geq 0}.
\end{eqnarray*}
\end{enumerate}
\end{notation}

\begin{notation}
For any integer $m$,  there is a unique integer $\lambda$ (not necessarily non-negative)  and a unique  integer  $0\leq q\leq 2t$  such that $m=\lambda(2t+1)+q$.   We  adopt  the following notations:
\begin{enumerate}[(i).]
\item
denote the chain complex
\begin{eqnarray*}
\xymatrix{
\cdots\ar[r]^-{\alpha}
&\Lambda_{(n+\lambda)(2t+1)+q}(V)\ar[r]^-{\alpha}
&\Lambda_{(n-1+\lambda)(2t+1)+q}(V)\ar[r]^-{\alpha}
&\\
\cdots\ar[r]^-{\alpha}
&\Lambda_{(1+\lambda)(2t+1)+q}(V)\ar[r]^-\alpha
&\Lambda_{\lambda(2t+1)+q}(V)\ar[r]^-\alpha
&0
}
\end{eqnarray*}
as
\begin{eqnarray*}
\Lambda_*(V,\alpha,m)=\{\Lambda_{(n+\lambda)(2t+1)+q} (V),\alpha\}_{n\geq 0};
\end{eqnarray*}
\item
denote the  co-chain complex
\begin{eqnarray*}
\xymatrix{
\cdots
&\Lambda_{(n+\lambda)(2t+1)+q}(V)\ar[l]_-{\omega}
&\Lambda_{(n-1+\lambda)(2t+1)+q}(V)\ar[l]_-{\omega}
&  \ar[l]_-{\omega}\\
\cdots&\Lambda_{(1+\lambda)(2t+1)+q}(V)\ar[l]_-{\omega}
&\Lambda_{\lambda(2t+1)q}(V)\ar[l]_-\omega
&\ar[l]_-\omega  0
}
\end{eqnarray*}
as
\begin{eqnarray*}
\Lambda^*(V,\omega,m)=\{\Lambda_{(n+\lambda)(2t+1)+q} (V),\omega\}_{n\geq 0}.
\end{eqnarray*}
\end{enumerate}
Here in both (i)  and (ii),  we  use the notation $\Lambda_k(V)=0$  for $k<0$.
\end{notation}

\begin{proposition}\label{pr-3.26}
Let $t,s\geq 0$  be   non-negative integers.  Let $m\in\mathbb{Z}$.
Let $\alpha\in {\rm Ext}_{2t+1}(V)$ and $\omega\in {\rm Ext}^{2t+1}(V)$.    Let  $\beta\in {\rm Ext}_{2s}(V)$  and $\mu\in {\rm Ext}^{2s}(V)$.  Then $\beta$  gives  a  chain map
\begin{eqnarray*}
\beta:~~~  \Lambda_*(V,\alpha,m)\longrightarrow \Lambda_*(V,\alpha,m-2s)
\end{eqnarray*}
and $\mu$  gives a   co-chain map
 \begin{eqnarray*}
\mu:~~~  \Lambda^*(V,\omega,m)\longrightarrow \Lambda^*(V,\omega,m+2s).
\end{eqnarray*}
\end{proposition}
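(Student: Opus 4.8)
The plan is to verify, for each of the two asserted maps, two separate things: that the operator in question sends each term of the source complex into the correct term of the target complex, and that it commutes with the relevant differential, which is precisely the chain-map (respectively co-chain-map) condition. The substantive input will turn out to be a single parity observation; the rest is degree bookkeeping.

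First I would settle the degrees. Write $m=\lambda(2t+1)+q$ with $0\le q\le 2t$, so that $\Lambda_*(V,\alpha,m)$ has $n$-th term $\Lambda_{(n+\lambda)(2t+1)+q}(V)$. Since $\beta\in T_{2s}(V)$ acts as a degree $-2s$ operator $\beta\colon\Lambda_k(V)\to\Lambda_{k-2s}(V)$ by (\ref{eq-2.cc}), applying it to the $n$-th term lands in $\Lambda_{(n+\lambda)(2t+1)+q-2s}(V)$. Writing $m-2s=\lambda'(2t+1)+q'$ with $0\le q'\le 2t$, one has $(n+\lambda)(2t+1)+q-2s=(n+\lambda')(2t+1)+q'$, which is exactly the $n$-th term of $\Lambda_*(V,\alpha,m-2s)$; when this index is negative the term is $0$ by the convention $\Lambda_k(V)=0$ for $k<0$, so there is nothing to check. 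The identical computation with $+2s$ in place of $-2s$, using (\ref{eq-2.dd}) for the degree $+2s$ operator $\mu$, shows that $\mu$ sends the $n$-th term of $\Lambda^*(V,\omega,m)$ into the $n$-th term of $\Lambda^*(V,\omega,m+2s)$.

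The heart of the argument is the commutation relation. The chain-map condition for $\beta$ is $\alpha\circ\beta=\beta\circ\alpha$ and the co-chain-map condition for $\mu$ is $\omega\circ\mu=\mu\circ\omega$. Since in both $T_*(V)$ and $T^*(V)$ the exterior product is by definition the composition of the corresponding operators, these are the identities $\alpha\wedge\beta=\beta\wedge\alpha$ and $\omega\wedge\mu=\mu\wedge\omega$. By the anti-symmetry of the exterior algebras (Lemma~\ref{le-0.0} for $T_*(V)$ and Lemma~\ref{le-2.xy} for $T^*(V)$), transposing two adjacent generators introduces a sign $-1$, so moving a homogeneous element of degree $p$ entirely past one of degree $r$ introduces the sign $(-1)^{pr}$. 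Here the two degrees are $2t+1$ and $2s$, whose product $2s(2t+1)$ is even, so the sign is $+1$ and the elements commute, giving both required identities.

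Combining the two steps, $\beta$ is a degree-preserving family of maps intertwining the two copies of the differential $\alpha$, hence a chain map $\Lambda_*(V,\alpha,m)\to\Lambda_*(V,\alpha,m-2s)$, and $\mu$ likewise intertwines the two copies of $\omega$, hence a co-chain map $\Lambda^*(V,\omega,m)\to\Lambda^*(V,\omega,m+2s)$. I do not expect a genuine obstacle: the only nontrivial ingredient is the parity observation that $2s(2t+1)$ is even, which is exactly what allows the even-degree elements $\beta$ and $\mu$ to commute with the odd-degree differentials $\alpha$ and $\omega$. The remaining point that could cause a slip is the index arithmetic of the first paragraph, but this is routine once $m$, $m-2s$, and $m+2s$ are each put into the normalized form $\lambda(2t+1)+q$.
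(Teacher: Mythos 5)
Your proposal is correct and follows essentially the same route as the paper: check that $\beta$ and $\mu$ shift degrees so as to land in the $n$-th term of the target complex, then obtain $\alpha\circ\beta=\beta\circ\alpha$ and $\omega\circ\mu=\mu\circ\omega$ from the anticommutative law, the sign $(-1)^{2s(2t+1)}$ being $+1$. The only difference is that you spell out the renormalization $m\mp 2s=\lambda'(2t+1)+q'$ explicitly, which the paper leaves implicit in its well-definedness remark.
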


\begin{proof}
Note that  as  linear maps,
\begin{eqnarray*}
\beta:~~~\Lambda_{(n+\lambda)(2t+1)+q} (V)\longrightarrow \Lambda_{(n+\lambda)(2t+1)+q-2s} (V)
\end{eqnarray*}
and
\begin{eqnarray*}
\mu:~~~\Lambda_{(n+\lambda)(2t+1)+q} (V)\longrightarrow \Lambda_{(n+\lambda)(2t+1)+q+2s} (V)
\end{eqnarray*}
are well-defined.
By the anti-symmetric property of exterior algebras,  we  have  (cf.  \cite[p,  53,  Anticommutative Law]{chern})
\begin{eqnarray*}
\alpha\wedge \beta=(-1)^{2s(2t+1)} \beta\wedge\alpha=\beta\wedge\alpha.
\end{eqnarray*}
That is,
\begin{eqnarray*}
\alpha\circ\beta=\beta\circ\alpha.
\end{eqnarray*}
Thus $\beta$  is a chain map  from  $\Lambda_*(V,\alpha,m)$  to  $\Lambda_*(V,\alpha,m-2s)$.
 Moreover,  we also have    (cf.  \cite[p,  53,  Anticommutative Law]{chern})
\begin{eqnarray*}
&\omega\wedge\mu=(-1)^{2s(2t+1)}\mu\wedge\omega=\mu\wedge\omega.
\end{eqnarray*}
That is,
\begin{eqnarray*}
\omega\circ\mu=\mu\circ\omega.
\end{eqnarray*}
Thus $\mu$  is a  co-chain map  from  $\Lambda^*(V,\omega,m)$  to  $\Lambda^*(V,\omega,m+2s)$.    The proposition follows.
\end{proof}

\smallskip

\section{Constrained  Homology for Simplicial Complexes and  Constrained Cohomology for Independent Hypergraphs}\label{s4}

In  this  section,   we  define  the constrained homology groups for simplicial complexes and the constrained cohomology groups for independent hypergraphs.  We  prove that any element $\beta\in {\rm Ext}_{2s}(V)$,  where $s\geq 0$,   induces  homomorphisms  between the constrained homology groups  for the  simplicial complexes on $V$.  We also  prove that any element $\mu\in {\rm Ext}^{2s}(V)$,  where $s\geq 0$,   induces  homomorphisms  between the constrained cohomology  groups  for the  independent hypergraphs on $V$.

\smallskip

\subsection{Some  Auxiliaries}

Throughout this section,  we 
let $V$  be a finite set.   Let $\Delta[V]$  be the complete hypergraph  on $V$.  For  each   integer  $n\geq 0$,  let
\begin{eqnarray*}
C_n(\Delta[V];\mathbb{R})={\rm Span}_{\mathbb{R}}\{\sigma^{(n)}\in \Delta[V]\}
\end{eqnarray*}
be  the vector space  consisting of all the linear combinations of the $n$-hyperedges on $V$.   Consider   the  direct  sum
\begin{eqnarray}\label{eq-aq}
C_*(\Delta[V];\mathbb{R})=\bigoplus_{n\geq 0} C_n(\Delta[V];\mathbb{R}).
\end{eqnarray}
Note that since $V$  is  assumed to be  a finite set,  the direct sum in the right-hand side  of  (\ref{eq-aq})  is  a  finite sum.

\begin{lemma}\label{le-4.1cc}
Let  $t\geq 0$  be a  non-negative integer.   Let  $m\in \mathbb{Z}$.    Suppose $m=\lambda(2t+1)+q$  where $\lambda\in \mathbb{Z}$  and  the integer $0\leq q\leq 2t$.  Then
for any $\alpha\in {\rm Ext}_{2t+1}(V)$,
the graded vector space
\begin{eqnarray}\label{eq-dt-1}
C_{(n+\lambda)(2t+1)+q}(\Delta[V];\mathbb{R}),~~~~~~  n\geq 0
\end{eqnarray}
equipped with the boundary map  $\alpha$  gives a sub-chain  complex of $\Lambda_*(V,\alpha, m)$,  which  will be denoted as    $C_*(\Delta[V],\alpha,m)$.
\end{lemma}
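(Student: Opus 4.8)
The plan is to verify the two defining conditions for a sub-chain complex: first that each $C_{(n+\lambda)(2t+1)+q}(\Delta[V];\mathbb{R})$ sits inside the corresponding term $\Lambda_{(n+\lambda)(2t+1)+q}(V)$ of $\Lambda_*(V,\alpha,m)$, and second that the differential $\alpha$ of the ambient complex carries these subspaces into one another. The inclusion is immediate: by Definition~\ref{def1} an $n$-hyperedge $v_0v_1\ldots v_n$ with $v_0\prec v_1\prec\cdots\prec v_n$ is in particular an ordered sequence of $n+1$ vertices, hence an elementary $n$-path, so $C_n(\Delta[V];\mathbb{R})\subseteq \Lambda_n(V)$ as a graded subspace. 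When the index is negative both sides are $0$ by the convention $\Lambda_k(V)=0$ for $k<0$.

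The heart of the argument is to show that $\alpha$ restricts to a map $C_n(\Delta[V];\mathbb{R})\to C_{n-(2t+1)}(\Delta[V];\mathbb{R})$ for every $n$. By linearity it suffices to treat a single generating monomial $\frac{\partial}{\partial w_1}\wedge\cdots\wedge\frac{\partial}{\partial w_{2t+1}}$ of $T_{2t+1}(V)$ applied to a single hyperedge $u_0u_1\ldots u_n$ with $u_0\prec\cdots\prec u_n$. Applying the explicit formula for the action of an element of $T_k(V)$ on an elementary path (the formula accompanying (\ref{eq-2.cc})) with $k=2t+1$, each surviving summand is, up to sign, a path of the form $u_0\ldots\widehat{u_{i_1}}\ldots\widehat{u_{i_{2t+1}}}\ldots u_n$ obtained by deleting $2t+1$ of the entries. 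The key observation is that deleting entries from the strictly increasing sequence $u_0\prec\cdots\prec u_n$ leaves a strictly increasing sequence; hence every such summand is again a hyperedge, i.e.\ lies in $C_{n-(2t+1)}(\Delta[V];\mathbb{R})$. If two of the $w_j$ coincide, or none of them matches an entry of the hyperedge, the corresponding terms simply vanish, which is harmless. Therefore $\alpha$ maps $C_{(n+\lambda)(2t+1)+q}(\Delta[V];\mathbb{R})$ into $C_{(n-1+\lambda)(2t+1)+q}(\Delta[V];\mathbb{R})$, matching exactly the index shift of the differential of $\Lambda_*(V,\alpha,m)$.

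Finally, since $\alpha\circ\alpha=0$ already holds on all of $\Lambda_*(V)$ by Proposition~\ref{le-3.8888x}, it holds a fortiori on the subspaces $C_*(\Delta[V];\mathbb{R})$. Combining this with the two conditions above shows that $\{C_{(n+\lambda)(2t+1)+q}(\Delta[V];\mathbb{R}),\alpha\}_{n\geq 0}$ is a sub-chain complex of $\Lambda_*(V,\alpha,m)$, which we denote $C_*(\Delta[V],\alpha,m)$. I do not expect any genuine obstacle here; the only points requiring care are the bookkeeping of indices, together with the convention $\Lambda_k(V)=C_k(\Delta[V];\mathbb{R})=0$ for $k<0$, and the clean observation that vertex-deletion preserves the defining strict order of a hyperedge.
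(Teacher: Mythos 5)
Your proof is correct and follows essentially the same route as the paper's: both reduce to checking that $\alpha$ restricts to a well-defined map $C_{(n+\lambda)(2t+1)+q}(\Delta[V];\mathbb{R})\to C_{(n-1+\lambda)(2t+1)+q}(\Delta[V];\mathbb{R})$ on monomial generators, the key point being that deleting vertices from a hyperedge yields a hyperedge. You simply spell out more explicitly the "observation" and "calculation of linear combinations" that the paper leaves terse.
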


\begin{proof}
For each $n\geq 0$,  the vector space  $C_{(n+\lambda)(2t+1)+q}(\Delta[V];\mathbb{R})$   is a  subspace of  the vector space  $\Lambda_{(n+\lambda)(2t+1)+q}(V)$.    Hence  in  order to prove  that (\ref{eq-dt-1})   equipped with $\alpha$   is  a  sub-chain  complex  of     $\Lambda_*(V,\alpha, m)$,   it  suffices to prove  that the map
\begin{eqnarray}\label{eq-wd}
\alpha:~~~ C_{(n+\lambda)(2t+1)+q}(\Delta[V];\mathbb{R})\longrightarrow C_{(n-1+\lambda)(2t+1)+q}(\Delta[V];\mathbb{R})
\end{eqnarray}
is  well-defined for each  $n\geq 0$.
This follows from the observation that for any        $[(n+\lambda)(2t+1)+q]$-simplex
\begin{eqnarray*}
  v_0v_1\ldots v_{(n+\lambda)(2t+1)+q} \in C_{(n+\lambda)(2t+1)+q}(\Delta[V];R)
\end{eqnarray*}
    and any
   \begin{eqnarray*}
\alpha=\frac{\partial}{\partial u_1} \wedge  \frac{\partial}{\partial  u_2}\wedge \cdots \wedge \frac{\partial}{\partial u_{2t+1}}
\end{eqnarray*}
where    $u_1, u_2,\ldots, u_{2t+1}\in V$  and  $u_1\prec u_2\prec\cdots \prec  u_{2t+1}$,     we  have
\begin{eqnarray*}
\alpha  (v_0v_1\ldots v_{(n+\lambda)(2t+1)+q} ) \in C_{(n-1+\lambda)(2t+1)+q}(\Delta[V];R).
\end{eqnarray*}
By  a  calculation of  linear combinations,   it follows that   the  map   (\ref{eq-wd})   is  well-defined.   Therefore,    the  graded   vector space    (\ref{eq-dt-1})   equipped with $\alpha$   is  a  sub-chain  complex  of     $\Lambda_*(V,\alpha, m)$.
\end{proof}

\begin{definition}
For  any  $n\geq 0$  and any elementary  $n$-path $v_0 v_1\ldots v_n$  on $V$,  we call   $v_0 v_1\ldots v_n$  a  {\it  non-simplicial} elementary  $n$-path  if there exist integers  $0\leq i<j\leq n$  such that  either  $v_j\prec v_i$  or  $v_j=v_i$.
\end{definition}

\begin{definition}
Let $\mathcal{O}_n(V)$  be   the vector space spanned by all the  non-simplicial   elementary  $n$-paths   on $V$.  Then     $\mathcal{O}_n(V)$  consists of all the   linear combinations  of the  non-simplicial   elementary  $n$-paths   on $V$.
 We  call  an  element   in  $\mathcal{O}_n(V)$  a  {\it  non-simplicial    $n$-path}  on $V$.
\end{definition}

\begin{lemma}\label{le-4.cd}
Let  $t\geq 0$  be  an  integer.   Let  $m\in \mathbb{Z}$.    Suppose $m=\lambda(2t+1)+q$  where $\lambda\in \mathbb{Z}$  and the integer $0\leq q\leq 2t$.  Then
for any $\omega\in {\rm Ext}^{2t+1}(V)$,
the graded vector space
\begin{eqnarray}\label{eq-4.97}
\mathcal{O}_{(n+\lambda)(2t+1)+q}(V),~~~~~~  n\geq 0
\end{eqnarray}
equipped with  the  co-boundary map  $\omega$  gives a sub-co-chain  complex of $\Lambda^*(V,\omega, m)$,  which will be  denoted as    $\mathcal{O}^*(V,\omega,m)$.
\end{lemma}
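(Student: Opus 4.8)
The plan is to establish this as the co-chain analogue of Lemma~\ref{le-4.1cc}, exploiting the explicit insertion formula (\ref{eq-2.x5}) for the partial differentiations $dv$. First I would observe that for each $n\geq 0$ the space $\mathcal{O}_{(n+\lambda)(2t+1)+q}(V)$ is, by construction, a subspace of $\Lambda_{(n+\lambda)(2t+1)+q}(V)$, since every cyclic elementary path is in particular an elementary path. Since Proposition~\ref{le-3.8888x} already guarantees $\omega\circ\omega=0$ on all of $\Lambda_*(V)$, the relation $\omega\circ\omega=0$ holds automatically after restriction; it therefore suffices to prove that $\omega$ restricts to a well-defined map on the cyclic subspaces, i.e. that
\[
\omega\bigl(\mathcal{O}_{(n+\lambda)(2t+1)+q}(V)\bigr)\subseteq \mathcal{O}_{(n+1+\lambda)(2t+1)+q}(V)
\]
for every $n\geq 0$.

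By linearity it is enough to verify this on a single cyclic elementary path, and by writing $\omega$ as a linear combination of wedge products $dv_1\wedge\cdots\wedge dv_{2t+1}$ it is enough to check that each factor $dv$ preserves cyclicity, that is $dv(\mathcal{O}_n(V))\subseteq\mathcal{O}_{n+1}(V)$. This is the heart of the argument. Let $u_0u_1\ldots u_{n-1}$ be a cyclic elementary path, so that $u_a=u_b$ for some $0\leq a<b\leq n-1$. By the insertion formula (\ref{eq-2.x5}),
\[
dv(u_0u_1\ldots u_{n-1})=\sum_{i=0}^n (-1)^i\, u_0\ldots u_{i-1}\,v\,u_i\ldots u_{n-1},
\]
and in every summand the vertices $u_a$ and $u_b$ both still occur (their indices are merely shifted by one when $v$ is inserted to their left). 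Hence each summand is again a cyclic elementary path, so the whole sum lies in $\mathcal{O}_{n+1}(V)$.

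Composing the $2t+1$ factors of $\omega$ then yields $\omega(\mathcal{O}_n(V))\subseteq\mathcal{O}_{n+2t+1}(V)$, and specializing the degrees gives the displayed inclusion. Restricting the co-boundary $\omega$ of $\Lambda^*(V,\omega,m)$ to the cyclic subspaces therefore produces a well-defined co-chain complex $\mathcal{O}^*(V,\omega,m)$ sitting inside $\Lambda^*(V,\omega,m)$, which is precisely a sub-co-chain complex. I do not expect any genuine obstacle here: the only point requiring care is the bookkeeping that inserting a vertex never destroys an existing repeated pair, which is immediate from (\ref{eq-2.x5}). This is exactly dual to the fact used in Lemma~\ref{le-4.1cc}, namely that deleting vertices from a strictly increasing hyperedge leaves a strictly increasing hyperedge.
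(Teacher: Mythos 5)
Your proof is correct and follows the same route as the paper: reduce to showing that each restricted map $\omega\colon \mathcal{O}_{(n+\lambda)(2t+1)+q}(V)\to\mathcal{O}_{(n+1+\lambda)(2t+1)+q}(V)$ is well-defined, the complex property $\omega\circ\omega=0$ being inherited from $\Lambda^*(V,\omega,m)$. The paper merely asserts the key observation that $\omega$ sends cyclic paths to cyclic paths ``by a calculation of linear combinations,'' whereas you actually justify it via the insertion formula (\ref{eq-2.x5}), noting that inserting a vertex never destroys a repeated pair; this is a welcome filling-in of detail rather than a different argument.
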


\begin{proof}
It suffices to verify that  the  map
\begin{eqnarray}\label{eq-4.98}
\omega:~~~ \mathcal{O}_{(n+\lambda)(2t+1)+q}(V)\longrightarrow \mathcal{O}_{(n+1+\lambda)(2t+1)+q}(V)
\end{eqnarray}
is well-defined for each $n\geq 0$.   This follows from the observation that  after adding some  vertices to any non-simplicial   elementary path, we still get a non-simplicial  elementary path.
 Hence for any  non-simplicial  elementary    $[(n+\lambda)(2t+1)+q]$-path
\begin{eqnarray*}
  v_0v_1\ldots v_{(n+\lambda)(2t+1)+q} \in \mathcal{O}_{(n+\lambda)(2t+1)+q}(V)
\end{eqnarray*}
    and any
   \begin{eqnarray*}
\omega=d u_1 \wedge  du_2\wedge \cdots \wedge d u_{2t+1}
\end{eqnarray*}
where    $u_1, u_2,\ldots, u_{2t+1}\in V$  and  $u_1\prec u_2\prec\cdots \prec  u_{2t+1}$,     we  have
\begin{eqnarray*}
\omega  (v_0v_1\ldots v_{(n+\lambda)(2t+1)+q} ) \in \mathcal{O}_{(n+1+\lambda)(2t+1)+q}(V).
\end{eqnarray*}
By  a  calculation of  linear combinations,  it follows that  the  map  (\ref{eq-4.98})  is well-defined.   Therefore,  the graded vector space  (\ref{eq-4.97})  equipped with $\omega$  is a sub-co-chain complex of   $\Lambda^*(V,\omega, m)$.
\end{proof}

\begin{lemma}\label{le-4.91}
Let  $t\geq 0$  be an integer.   Let  $m\in \mathbb{Z}$.    Suppose $m=\lambda(2t+1)+q$  where $\lambda\in \mathbb{Z}$  and  the  integer  $0\leq q\leq 2t$.  Then
for any $\omega\in {\rm Ext}^{2t+1}(V)$,
the graded vector space
\begin{eqnarray}\label{eq-4.95}
C_{(n+\lambda)(2t+1)+q}(\Delta[V];\mathbb{R}),~~~~~~  n\geq 0
\end{eqnarray}
equipped with  the  co-boundary  map  $\omega$  gives a quotient co-chain  complex   $\Lambda^*(V,\omega, m)/\mathcal{O}^*(V,\omega,m)$  which will be denoted as $C^*(\Delta[V],\omega,m)$.
\end{lemma}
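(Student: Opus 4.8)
The plan is to realise $C^*(\Delta[V],\omega,m)$ as the quotient co-chain complex $\Lambda^*(V,\omega,m)/\mathcal{O}^*(V,\omega,m)$ and then to identify, degree by degree, each quotient space with $C_{(n+\lambda)(2t+1)+q}(\Delta[V];\mathbb{R})$. First I would invoke Lemma~\ref{le-4.cd}: since $\mathcal{O}^*(V,\omega,m)$ is a sub-co-chain complex of $\Lambda^*(V,\omega,m)$, the co-boundary $\omega$ carries $\mathcal{O}_{(n+\lambda)(2t+1)+q}(V)$ into $\mathcal{O}_{(n+1+\lambda)(2t+1)+q}(V)$, and hence descends to a well-defined induced map $\bar\omega$ on the graded quotient. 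Therefore $(\Lambda^*/\mathcal{O}^*,\bar\omega)$ is automatically a co-chain complex, and the remaining work is purely the identification of its terms with the $C_{\bullet}(\Delta[V];\mathbb{R})$ together with the verification that $\bar\omega$ agrees with $\omega$ under this identification.

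For the identification I would use the orthonormal basis of elementary paths underlying the inner product (\ref{eq-1.2}). In each degree the elementary paths split into the cyclic ones, which span $\mathcal{O}_n(V)$, and the non-cyclic ones, which have $n+1$ pairwise distinct vertices; projecting to the quotient annihilates exactly the cyclic basis vectors. I would then send the class of a non-cyclic path to the signed hyperedge on the same vertex set, the sign being the signature of the permutation that sorts its vertices with respect to $\prec$. This assignment is meant to be dual, under (\ref{eq-2.in}) and Proposition~\ref{le-2.1}, to the inclusion of $C_*(\Delta[V],\alpha,m)$ as a sub-chain complex produced in Lemma~\ref{le-4.1cc}; concretely, one wants to exhibit $\mathcal{O}_n(V)$ as playing the role of the annihilator of $C_n(\Delta[V];\mathbb{R})$, so that the quotient is canonically the dual of the subspace $C_n(\Delta[V];\mathbb{R})$.

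Finally I would check that $\bar\omega$ is just $\omega$ read on hyperedges. Applying the insertion formula (\ref{eq-2.x5}) of Lemma~\ref{le-x} to a hyperedge $v_0\ldots v_n$, every summand that inserts an already-present vertex is a cyclic path and dies in the quotient, while every summand that inserts a new vertex becomes, after sorting, a signed hyperedge; collecting the factors $(-1)^i$ against the sorting signatures shows that the surviving sum again lies in $C_{\bullet}(\Delta[V];\mathbb{R})$. The anticommutativity of the $dv$'s from Lemma~\ref{le-2.xy} then guarantees that the outcome is independent of the chosen ordering of the factors $du_j$ in $\omega$.

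The main obstacle is exactly the term-wise identification in the second step. A priori the quotient $\Lambda_n(V)/\mathcal{O}_n(V)$ still retains every ordering of the vertices of a given hyperedge, so it is strictly larger than $C_n(\Delta[V];\mathbb{R})$, and one must pin down precisely how these reorderings are to be collapsed to signs in order that $\mathcal{O}_n(V)$ behave as the annihilator of $C_n(\Delta[V];\mathbb{R})$. Establishing this compatibility — equivalently, that $\bar\omega$ is genuinely the adjoint of the simplicial boundary $\alpha$ on $C_*(\Delta[V],\alpha,m)$ — is the delicate point; once it is in place, the co-chain complex identity $\bar\omega\circ\bar\omega=0$ is inherited for free from $\omega\wedge\omega=0$, since $2t+1$ is odd.
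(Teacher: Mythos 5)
Your overall route is the same as the paper's: form the quotient of $\Lambda^*(V,\omega,m)$ by the sub-co-chain complex $\mathcal{O}^*(V,\omega,m)$ from Lemma~\ref{le-4.cd}, and then identify each quotient space with the corresponding $C_{(n+\lambda)(2t+1)+q}(\Delta[V];\mathbb{R})$. The first half of your argument is fine. The genuine gap is the identification step, which you correctly single out as ``the delicate point'' but never actually carry out --- and, as you have set it up, it cannot be carried out, because the map you propose is not injective. With $N=\# V$, the quotient $\Lambda_k(V)/\mathcal{O}_k(V)$ has as a basis the classes of all elementary $k$-paths with pairwise distinct vertices, i.e.\ all $(k+1)!$ orderings of each $(k+1)$-element subset of $V$, so its dimension is $(k+1)!\binom{N}{k+1}$, while $\dim C_k(\Delta[V];\mathbb{R})=\binom{N}{k+1}$. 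Your ``signed sorting'' map is therefore a surjection with a large kernel (containing, e.g., the class of $v_0v_1+v_1v_0$), not an isomorphism, and $\mathcal{O}_k(V)$ is not the annihilator of $C_k(\Delta[V];\mathbb{R})$ with respect to the inner product (\ref{eq-1.2}): that annihilator also contains every non-sorted ordering minus its signed sorted representative. A correct identification would require quotienting by a strictly larger subcomplex --- the cyclic paths together with the antisymmetrization relations $v_{\sigma(0)}\ldots v_{\sigma(k)}-\mathrm{sgn}(\sigma)\,v_0\ldots v_k$ --- and then verifying that $\omega$ preserves that larger subspace; your third step would also need revisiting, since applying (\ref{eq-2.x5}) to a sorted hyperedge and then signed-sorting produces each surviving hyperedge with multiplicity $k+2$ rather than once, so the induced map is not simply ``$\omega$ read on hyperedges.''

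For context, the paper's own proof does not resolve this either: it asserts in one sentence that $\Lambda_k(V)/\mathcal{O}_k(V)$ is ``canonically isomorphic'' to $C_k(\Delta[V];\mathbb{R})$ and moves on, so the obstacle you flagged is present there as well. Your proposal is more explicit and more honest about where the difficulty lies, but as written it does not close the gap, and no argument that keeps $\mathcal{O}_k(V)$ exactly as defined in the paper can.
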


\begin{proof}
Note that the  canonical  inclusion of  the   sub-co-chain complex $\mathcal{O}^*(V,\omega,m)$  into   the co-chain  complex  $\Lambda^*(V,\omega, m)$  gives a quotient co-chain complex $\Lambda^*(V,\omega, m)/\mathcal{O}^*(V,\omega,m)$.  On the other hand,  for each $n\geq 0$,   the  quotient  vector space
\begin{eqnarray*}
\Lambda_{(n+\lambda)(2t+1)+q}(V)/\mathcal{O}_{(n+\lambda)(2t+1)+q}(V)
\end{eqnarray*}
   is   canonically  isomorphic to  the vector space  $C_{(n+\lambda)(2t+1)+q}(\Delta[V];\mathbb{R})$.  Therefore,  the  quotient co-chain complex $\Lambda^*(V,\omega, m)/\mathcal{O}^*(V,\omega,m)$  is given by  the graded vector space (\ref{eq-4.95})  equipped with the  co-boundary  map  $\omega$.   The lemma follows.
\end{proof}

With the help of Proposition~\ref{pr-3.26},  the next proposition follows.

\begin{proposition}\label{pr-4.26}
Let $t,s\geq 0$  be   non-negative integers.  Let $m\in\mathbb{Z}$.
Let $\alpha\in {\rm Ext}_{2t+1}(V)$ and $\omega\in {\rm Ext}^{2t+1}(V)$.    Let  $\beta\in {\rm Ext}_{2s}(V)$  and $\mu\in {\rm Ext}^{2s}(V)$.  Then $\beta$ is a  chain map
\begin{eqnarray}\label{eq-4.81}
\beta:~~~  C_*(\Delta[V],\alpha,m)\longrightarrow  C_*(\Delta[V],\alpha,m-2s)
\end{eqnarray}
and $\mu$ is a   co-chain map
 \begin{eqnarray}\label{eq-4.82}
\mu:~~~  C^*(\Delta[V],\omega,m)\longrightarrow C^*(\Delta[V],\omega,m+2s).
\end{eqnarray}
\end{proposition}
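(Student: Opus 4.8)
The plan is to deduce both assertions from Proposition~\ref{pr-3.26}, which already supplies the chain map $\beta:\Lambda_*(V,\alpha,m)\to\Lambda_*(V,\alpha,m-2s)$ and the co-chain map $\mu:\Lambda^*(V,\omega,m)\to\Lambda^*(V,\omega,m+2s)$ on the ambient path complexes, encoded by the commutation identities $\alpha\circ\beta=\beta\circ\alpha$ and $\omega\circ\mu=\mu\circ\omega$. Since $C_*(\Delta[V],\alpha,m)$ is a sub-chain complex of $\Lambda_*(V,\alpha,m)$ by Lemma~\ref{le-4.1cc}, and $C^*(\Delta[V],\omega,m)$ is the quotient co-chain complex $\Lambda^*(V,\omega,m)/\mathcal{O}^*(V,\omega,m)$ by Lemma~\ref{le-4.91}, the whole task reduces to checking that $\beta$ restricts to the sub-complex on the chain side and that $\mu$ descends to the quotient on the co-chain side.

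For the chain map (\ref{eq-4.81}), first I would verify that $\beta$ carries $C_k(\Delta[V];\mathbb{R})$ into $C_{k-2s}(\Delta[V];\mathbb{R})$. Writing $\beta$ as a linear combination of terms $\frac{\partial}{\partial w_1}\wedge\cdots\wedge\frac{\partial}{\partial w_{2s}}$ and applying each such term to an $n$-hyperedge $v_0v_1\ldots v_n$, whose vertices are distinct and strictly increasing, the partial derivatives merely delete $2s$ of the vertices; by the formula (\ref{eq-1.1}) the output is a linear combination of strictly increasing subsequences, that is, of honest hyperedges in $\Delta[V]$. This is precisely the well-definedness established in the proof of Lemma~\ref{le-4.1cc}. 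Hence $\beta$ restricts to a linear map $C_*(\Delta[V],\alpha,m)\to C_*(\Delta[V],\alpha,m-2s)$, and since $\alpha\circ\beta=\beta\circ\alpha$ holds on all of $\Lambda_*(V)$ it holds on the sub-complex by restriction, giving (\ref{eq-4.81}).

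For the co-chain map (\ref{eq-4.82}), I would instead show that $\mu$ preserves the sub-co-chain complex $\mathcal{O}^*(V,\omega,m)$ of cyclic paths. Writing $\mu$ as a combination of terms $dw_1\wedge\cdots\wedge dw_{2s}$ and applying each to a cyclic elementary path $v_0v_1\ldots v_n$, one with $v_i=v_j$ for some $i<j$, via the insertion formula (\ref{eq-2.x5}), the two occurrences $v_i$ and $v_j$ survive in every summand, so each resulting path is again cyclic. Thus $\mu$ maps $\mathcal{O}_k(V)$ into $\mathcal{O}_{k+2s}(V)$, exactly as in the proof of Lemma~\ref{le-4.cd}. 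Consequently $\mu$ induces a well-defined map on the quotients $\Lambda^*(V,\omega,\cdot)/\mathcal{O}^*(V,\omega,\cdot)$, and the identity $\omega\circ\mu=\mu\circ\omega$ descends to this quotient; by the identification of Lemma~\ref{le-4.91} this yields the co-chain map (\ref{eq-4.82}).

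The routine chain/co-chain commutativity has already been carried out in Proposition~\ref{pr-3.26}, so the only real content lies in the two structural checks: that $\beta$ sends hyperedges to linear combinations of hyperedges, keeping us inside $C_*(\Delta[V])$, and that $\mu$ sends cyclic paths to linear combinations of cyclic paths, keeping us inside $\mathcal{O}^*$ and hence descending to the quotient. I expect the second to be marginally the more delicate, since one must confirm that cyclicity survives every vertex insertion performed by $\mu$, checking this summand-by-summand in the expansion (\ref{eq-2.x5}) rather than through a single vertex deletion.
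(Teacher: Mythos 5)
Your proposal is correct and follows essentially the same route as the paper: both reduce the statement to the commutation identities $\alpha\circ\beta=\beta\circ\alpha$ and $\omega\circ\mu=\mu\circ\omega$ from Proposition~\ref{pr-3.26}, and then check well-definedness by the arguments of Lemma~\ref{le-4.1cc} (deleting vertices from a hyperedge yields hyperedges, so $\beta$ restricts to the sub-chain complex) and of Lemmas~\ref{le-4.cd}--\ref{le-4.91} (inserting vertices preserves cyclicity, so $\mu$ preserves $\mathcal{O}^*$ and descends to the quotient). Your write-up merely makes explicit the details the paper leaves as ``a similar argument.''
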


 \begin{proof}
By a similar argument  in the proof of  Lemma~\ref{le-4.1cc},  it can be verified  that   as  a  linear map,  $\beta$   in  (\ref{eq-4.81})   is well-defined.  Thus  by Proposition~\ref{pr-3.26}  and Lemma~\ref{le-4.1cc},   it follows that  $\beta$   in  (\ref{eq-4.81})  is a  chain map.   On the other hand,  by a similar argument  in the proof of  Lemma~\ref{le-4.91},  it can be verified  that   as  a  linear map,  $\mu$   in  (\ref{eq-4.82})   is well-defined.   Thus  by Proposition~\ref{pr-3.26}  and Lemma~\ref{le-4.91}, it follows that  $\omega$   in  (\ref{eq-4.82})  is a  co-chain map.
 \end{proof}

\smallskip

\subsection{Constrained Homology for Simplicial Complexes}\label{ss4.2}

Let $\mathcal{K}$  be a simplicial  complex  with its vertices in $V$.

\begin{notation}
  For each  non-negative  integer  $n\geq 0$,  let  $C_n(\mathcal{K};\mathbb{R})$  be the (real)  vector space consisting of all the linear  combinations of the $n$-simplices in $\mathcal{K}$.
\end{notation}

\begin{theorem}\label{th-4.1}
Let  $t,s\geq 0$ be  non-negative  integers.   Let  $m\in \mathbb{Z}$.    Suppose $m=\lambda(2t+1)+q$  where $\lambda\in \mathbb{Z}$  and  $0\leq q\leq 2t$.  Then
\begin{enumerate}[(i).]
\item
for any $\alpha\in {\rm Ext}_{2t+1}(V)$,
the graded vector space
\begin{eqnarray}\label{eq-dt-9a}
C_{(n+\lambda)(2t+1)+q}(\mathcal{K};\mathbb{R}),~~~~~~  n\geq 0
\end{eqnarray}
equipped with the  chain  map  $\alpha$  gives a sub-chain  complex   of $C_*(\Delta[V],\alpha, m)$,  which will be  denoted  as $C_*(\mathcal{K},\alpha, m)$;
\item
  for any $\beta\in {\rm Ext}_{2s}(V)$,  there is  an  induced   chain map
\begin{eqnarray}\label{eq-4.89a}
\beta:~~~  C_*(\mathcal{K},\alpha,m)\longrightarrow  C_*(\mathcal{K},\alpha,m-2s).
\end{eqnarray}
\end{enumerate}
\end{theorem}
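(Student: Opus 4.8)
The plan is to derive both parts of the theorem by restriction from the structures already established on the complete hypergraph, the single essential new input being the defining closure property of a simplicial complex. Observe first that for each $n$ the space $C_{(n+\lambda)(2t+1)+q}(\mathcal{K};\mathbb{R})$ is a linear subspace of $C_{(n+\lambda)(2t+1)+q}(\Delta[V];\mathbb{R})$, since every simplex of $\mathcal{K}$ is in particular a hyperedge of $\Delta[V]$. Thus to prove (i) it suffices to check that the boundary map $\alpha$ of the ambient chain complex $C_*(\Delta[V],\alpha,m)$, constructed in Lemma~\ref{le-4.1cc}, carries this subspace into the corresponding lower-degree subspace; the relation $\alpha\circ\alpha=0$ is then inherited from Proposition~\ref{le-3.8888x} with no further work, and the sub-chain complex $C_*(\mathcal{K},\alpha,m)$ follows at once.

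To verify this stability I would reduce to a single generator $\alpha=\frac{\partial}{\partial u_1}\wedge\cdots\wedge\frac{\partial}{\partial u_{2t+1}}$ and a single simplex $\sigma=v_0v_1\ldots v_N$ of $\mathcal{K}$ with $N=(n+\lambda)(2t+1)+q$, extending linearly afterwards. Unwinding the definition (\ref{eq-1.1}) of the partial derivatives iteratively (cf. the explicit formula for $\alpha(u_0u_1\ldots u_n)$ displayed after (\ref{eq-2.cc})), each nonzero term of $\alpha(\sigma)$ is, up to sign, the face of $\sigma$ obtained by deleting the $2t+1$ vertices $u_1,\ldots,u_{2t+1}$; that is, it is a hyperedge $\sigma'$ with $\sigma'\subseteq\sigma$ of dimension $N-(2t+1)=(n-1+\lambda)(2t+1)+q$. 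Since $\mathcal{K}$ is a simplicial complex, Definition~\ref{def-5} guarantees $\sigma'\in\mathcal{K}$, so $\alpha(\sigma)\in C_{(n-1+\lambda)(2t+1)+q}(\mathcal{K};\mathbb{R})$. This establishes the well-definedness of the restricted map and hence part (i).

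Part (ii) proceeds by exactly the same face-closure argument applied to $\beta\in T_{2s}(V)$: writing $\beta$ on generators as a wedge of $2s$ partial derivatives, each nonzero term of $\beta(\sigma)$ is again a subset of $\sigma$, hence a simplex of $\mathcal{K}$, so $\beta$ restricts to a well-defined linear map $C_*(\mathcal{K},\alpha,m)\to C_*(\mathcal{K},\alpha,m-2s)$. That this restriction commutes with $\alpha$ is inherited from Proposition~\ref{pr-4.26} (equivalently Proposition~\ref{pr-3.26}), where the commutation $\alpha\circ\beta=\beta\circ\alpha$ rests on $(-1)^{2s(2t+1)}=1$; restricting an operator identity to an invariant subspace preserves it, so $\beta$ is a chain map on $C_*(\mathcal{K},\alpha,m)$.

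I expect no genuine obstacle. The only point requiring care is the bookkeeping showing that each surviving term of $\alpha(\sigma)$ and $\beta(\sigma)$ is literally a subset of the vertex set of $\sigma$, which is immediate from the vertex-deletion form of (\ref{eq-1.1}). All conceptual content is concentrated in invoking the closure of $\mathcal{K}$ under passage to non-empty subsets; everything algebraic ($\alpha\circ\alpha=0$ and $\alpha\circ\beta=\beta\circ\alpha$) is imported unchanged from the ambient path complex $\Lambda_*(V)$.
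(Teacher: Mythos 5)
Your proposal is correct and follows essentially the same route as the paper: both reduce to a monomial generator of $T_{2t+1}(V)$ acting on a single simplex, observe that every nonzero term of the image is a face of that simplex and hence lies in $\mathcal{K}$ by the closure property of Definition~\ref{def-5}, and then import $\alpha\circ\alpha=0$ and $\alpha\circ\beta=\beta\circ\alpha$ from Lemma~\ref{le-4.1cc} and Proposition~\ref{pr-4.26}. Your write-up merely makes explicit the face-deletion bookkeeping that the paper summarizes as ``a calculation of linear combinations.''
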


\begin{proof}
We  prove (i)  and (ii)  subsequently.

(i).
For each $n\geq 0$,  the vector space  $C_{(n+\lambda)(2t+1)+q}(\mathcal{K};\mathbb{R})$  is  a  subspace  of  the vector space  $C_{(n+\lambda)(2t+1)+q}(\Delta[V];\mathbb{R})$.
  Hence  in  order to prove  that   the  graded vector  space  (\ref{eq-dt-9a})   equipped with  the  chain  map  $\alpha$   is  a  sub-chain  complex  of     $C_*(\Delta[V],\alpha, m)$,   it  suffices to prove  that  the  map
\begin{eqnarray}\label{eq-wd-aa}
\alpha:~~~ C_{(n+\lambda)(2t+1)+q}(\mathcal{K};\mathbb{R})\longrightarrow C_{(n-1+\lambda)(2t+1)+q}(\mathcal{K};\mathbb{R})
\end{eqnarray}
is  well-defined for each  $n\geq 0$.  This follows from the observation that for any        $[(n+\lambda)(2t+1)+q]$-simplex
\begin{eqnarray*}
  v_0v_1\ldots v_{(n+\lambda)(2t+1)+q} \in C_{(n+\lambda)(2t+1)+q}(\mathcal{K};R)
\end{eqnarray*}
    and any
   \begin{eqnarray*}
\alpha=\frac{\partial}{\partial u_1} \wedge  \frac{\partial}{\partial  u_2}\wedge \cdots \wedge \frac{\partial}{\partial u_{2t+1}}
\end{eqnarray*}
where    $u_1, u_2,\ldots, u_{2t+1}\in V$  and  $u_1\prec u_2\prec\cdots \prec  u_{2t+1}$,     we  have
\begin{eqnarray*}
\alpha  (v_0v_1\ldots v_{(n+\lambda)(2t+1)+q} ) \in C_{(n-1+\lambda)(2t+1)+q}(\mathcal{K};R).
\end{eqnarray*}
By  a  calculation of  linear combinations,   it follows that   the  map  (\ref{eq-wd-aa})   is  well-defined.   Therefore,    the  graded   vector space    (\ref{eq-dt-9a})   equipped with  the  chain  map  $\alpha$   is  a  sub-chain  complex  of    $C_*(\Delta[V],\alpha, m)$.

(ii).  Similar with the   verification that    the  map  $\alpha$  in (\ref{eq-wd-aa})   is well-defined for each $n\geq 0$,  we can prove that  the map
\begin{eqnarray*}
\beta:~~~ C_{(n+\lambda)(2t+1)+q}(\mathcal{K};\mathbb{R})\longrightarrow C_{(n+\lambda)(2t+1)+q-2s}(\mathcal{K};\mathbb{R})
\end{eqnarray*}
is  well-defined for each  $n\geq 0$.  Therefore,  with the help of (\ref{eq-4.81})  in Proposition~\ref{pr-4.26},   we  have that  $\beta$  gives a chain map   in  (\ref{eq-4.89a}).
\end{proof}

\begin{definition}\label{def-4.11}
Let  $t\geq 0$  be a  non-negative integer.    Let $\alpha\in  {\rm Ext}_{2t+1}(V)$.  Let  $m\in \mathbb{Z}$.   Suppose $m=\lambda(2t+1)+q$  where $\lambda\in \mathbb{Z}$  and  $0\leq q\leq 2t$.   Let $\mathcal{K}$  be a simplicial complex with its vertices in $V$.  For each $n\geq 0$,  we  define the  $n$-th  {\it  constrained homology group} $H_n(\mathcal{K},\alpha,m)$ of $\mathcal{K}$  with respect to $\alpha$ and $m$   to be the  $n$-th   homology group
\begin{eqnarray*}
H_n(\mathcal{K},\alpha,m):&=&H_n(C_*(\mathcal{K},\alpha, m))  \\
&=& \frac{{\rm Ker}\Big(\alpha:C_{(n+\lambda)(2t+1)+q}(\mathcal{K};\mathbb{R})\longrightarrow C_{(n-1+\lambda)(2t+1)+q}(\mathcal{K};\mathbb{R})\Big)}{ {\rm Im}\Big(\alpha:C_{(n+1+\lambda)(2t+1)+q}(\mathcal{K};\mathbb{R})\longrightarrow C_{(n+\lambda)(2t+1)+q}(\mathcal{K};\mathbb{R})\Big) }
\end{eqnarray*}
of the chain complex $C_*(\mathcal{K},\alpha, m)$.
\end{definition}

The next theorem follows  from  Theorem~\ref{th-4.1}  and  Definition~\ref{def-4.11} immediately.

\begin{theorem}[Main Result I]
\label{th-4.1h}
Let   $t,s\geq 0$  be  non-negative  integers.   Let  $m\in \mathbb{Z}$.    Suppose $m=\lambda(2t+1)+q$  where $\lambda\in \mathbb{Z}$  and  $0\leq q\leq 2t$.  Then for any $\alpha\in {\rm Ext}_{2t+1}(V)$  and $\beta\in {\rm Ext}_{2s}(V)$,  there is an  induced homomorphism
\begin{eqnarray}\label{eq-722}
\beta_*: ~~~H_n(\mathcal{K},\alpha,m)\longrightarrow  H_n(\mathcal{K},\alpha,m-2s), ~~~~~~  n\geq 0
\end{eqnarray}
of  the  constrained  homology groups.
\end{theorem}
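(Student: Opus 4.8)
The plan is to reduce the statement to the elementary functoriality of homology: any chain map between two chain complexes descends canonically to a homomorphism between their homology groups. Since $H_n(\mathcal{K},\alpha,m)$ is \emph{defined} in Definition~\ref{def-4.11} as the $n$-th homology of the chain complex $C_*(\mathcal{K},\alpha,m)$, it suffices to exhibit a chain map from $C_*(\mathcal{K},\alpha,m)$ to $C_*(\mathcal{K},\alpha,m-2s)$ that preserves the homological index $n$, and then pass to homology.

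First I would invoke Theorem~\ref{th-4.1}(ii), which already supplies the chain map
\begin{eqnarray*}
\beta:~~~ C_*(\mathcal{K},\alpha,m)\longrightarrow C_*(\mathcal{K},\alpha,m-2s).
\end{eqnarray*}
The content hidden in the word ``chain map'' is the commutation $\alpha\circ\beta=\beta\circ\alpha$. This is precisely the anticommutative law $\alpha\wedge\beta=(-1)^{2s(2t+1)}\beta\wedge\alpha$ of Proposition~\ref{pr-3.26}, transported to $\mathcal{K}$ via Proposition~\ref{pr-4.26}; the crucial point is that the sign equals $+1$ because $2s(2t+1)$ is even, so $\alpha$ and $\beta$ genuinely commute rather than anticommute.

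Next I would record the degree bookkeeping that makes the induced map land in $H_n$ with the same index $n$. The $n$-th term of the source is $C_{(n+\lambda)(2t+1)+q}(\mathcal{K};\mathbb{R})$, and $\beta$ lowers degree by $2s$, so it maps into $C_{(n+\lambda)(2t+1)+q-2s}(\mathcal{K};\mathbb{R})$. Using
\begin{eqnarray*}
(n+\lambda)(2t+1)+q-2s=n(2t+1)+(m-2s),
\end{eqnarray*}
this is exactly the $n$-th term of $C_*(\mathcal{K},\alpha,m-2s)$. Hence $\beta$ respects the indexing and sends the $n$-th group to the $n$-th group.

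Finally I would verify the passage to homology directly. If $\alpha(z)=0$ then $\alpha(\beta(z))=\beta(\alpha(z))=0$, so $\beta$ carries cycles to cycles; and $\beta(\alpha(w))=\alpha(\beta(w))$ shows $\beta$ carries boundaries to boundaries. Therefore $\beta$ descends to the quotient and yields the homomorphism $\beta_*\colon H_n(\mathcal{K},\alpha,m)\to H_n(\mathcal{K},\alpha,m-2s)$ for every $n\geq 0$. I do not expect a real obstacle: the only things requiring care are the even-sign computation guaranteeing $\alpha\beta=\beta\alpha$ and the index identity above, both of which are already discharged by the cited results. This is why the theorem follows immediately from Theorem~\ref{th-4.1} and Definition~\ref{def-4.11}.
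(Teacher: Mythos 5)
Your proposal is correct and follows essentially the same route as the paper: the paper's proof simply applies the homology functor to the chain complex of Theorem~\ref{th-4.1}~(i) and the chain map of Theorem~\ref{th-4.1}~(ii), which is exactly what you do, with the cycle/boundary verification and the degree identity $(n+\lambda)(2t+1)+q-2s=n(2t+1)+(m-2s)$ spelled out explicitly.
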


\begin{proof}
Apply the homology functor to the chain complex in Theorem~\ref{th-4.1}~(i) and   the chain map  in Theorem~\ref{th-4.1}~(ii).  We  obtain the homomorphism  $\beta_*$   of the   constrained homology groups  in (\ref{eq-722}).
\end{proof}

\smallskip

\subsection{Constrained  Cohomology for Independent Hypergraphs}\label{ss4.3}

Let $\mathcal{L}$  be a  independent hypergraph  with its vertices in $V$.

\begin{notation}
  For each  non-negative  integer $n\geq 0$,  let  $C_n(\mathcal{L};\mathbb{R})$  be the (real)  vector space consisting of all the linear  combinations of the $n$-hyperedges in $\mathcal{L}$.
\end{notation}

\begin{theorem}\label{th-4.2}
Let  $t,s\geq 0$  be  non-negative  integers.   Let  $m\in \mathbb{Z}$.    Suppose $m=\lambda(2t+1)+q$  where $\lambda\in \mathbb{Z}$  and  $0\leq q\leq 2t$.  Then
\begin{enumerate}[(i).]
\item
for any $\omega\in {\rm Ext}^{2t+1}(V)$,
the graded vector space
\begin{eqnarray}\label{eq-dt-999}
C_{(n+\lambda)(2t+1)+q}(\mathcal{L};\mathbb{R}),~~~~~~  n\geq 0
\end{eqnarray}
equipped with  the  co-boundary  map  $\omega$  gives a sub-co-chain  complex   of $C^*(\Delta[V],\omega, m)$,  which will be denoted  as $C^*(\mathcal{L},\omega, m)$;
\item
 for any $\mu\in {\rm Ext}^{2s}(V)$,  there is  an  induced  co-chain map
\begin{eqnarray}\label{eq-4.8999}
\mu:~~~  C^*(\mathcal{L},\omega,m)\longrightarrow  C^*(\mathcal{L},\omega,m+2s).
\end{eqnarray}
\end{enumerate}
\end{theorem}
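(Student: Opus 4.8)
The plan is to mirror the homological argument of Theorem~\ref{th-4.1}, replacing the partial derivatives $\frac{\partial}{\partial v}$ (which pass to faces) by the partial differentiations $dv$ (which pass to cofaces), and replacing the downward closure of a simplicial complex by the upward closure of a co-simplicial complex. Throughout I work inside the quotient co-chain complex $C^*(\Delta[V],\omega,m)$ furnished by Lemma~\ref{le-4.91}, in which the cyclic elementary paths have already been annihilated.

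For part (i), since $C_{(n+\lambda)(2t+1)+q}(\mathcal{L};\mathbb{R})$ is a subspace of $C_{(n+\lambda)(2t+1)+q}(\Delta[V];\mathbb{R})$ for each $n\geq 0$, it suffices to prove that the co-boundary map
\begin{eqnarray*}
\omega:~~~ C_{(n+\lambda)(2t+1)+q}(\mathcal{L};\mathbb{R})\longrightarrow C_{(n+1+\lambda)(2t+1)+q}(\mathcal{L};\mathbb{R})
\end{eqnarray*}
is well-defined for each $n\geq 0$. First I would fix a co-simplex $\sigma=v_0v_1\ldots v_M\in\mathcal{L}$ of degree $M=(n+\lambda)(2t+1)+q$ and a generator $\omega=du_1\wedge du_2\wedge\cdots\wedge du_{2t+1}$ with $u_1\prec\cdots\prec u_{2t+1}$, and compute $\omega(\sigma)$ by iterating the insertion formula (\ref{eq-2.x5}) of Lemma~\ref{le-x}. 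Each term of $\omega(\sigma)$ is, up to sign, an elementary path obtained from $\sigma$ by inserting the $2t+1$ vertices $u_1,\ldots,u_{2t+1}$; in the quotient $C^*(\Delta[V],\omega,m)$ every term in which an inserted vertex coincides with another vertex produces a repeated vertex, hence a cyclic path, and therefore vanishes. Consequently $\omega(\sigma)$ is supported on hyperedges $\tau$ with $\sigma\subseteq\tau$ and $\#\tau=\#\sigma+(2t+1)$. Since $\mathcal{L}$ is a co-simplicial complex, the upward closure of Definition~\ref{def-6} forces each such $\tau\in\mathcal{L}$, whence $\omega(\sigma)\in C_{(n+1+\lambda)(2t+1)+q}(\mathcal{L};\mathbb{R})$; extending linearly shows the map is well-defined. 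As $C^*(\Delta[V],\omega,m)$ is a co-chain complex by Lemma~\ref{le-4.91} and the graded subspace (\ref{eq-dt-999}) is $\omega$-invariant, it is a sub-co-chain complex, as asserted.

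For part (ii), the same coface computation applied to a generator $\mu=du_1\wedge\cdots\wedge du_{2s}$ shows that $\mu$ carries $C_{(n+\lambda)(2t+1)+q}(\mathcal{L};\mathbb{R})$ into $C_{(n+\lambda)(2t+1)+q+2s}(\mathcal{L};\mathbb{R})$, again by the upward closure of $\mathcal{L}$; hence $\mu$ restricts to a linear map of the sub-co-chain complexes underlying $C^*(\mathcal{L},\omega,m)\to C^*(\mathcal{L},\omega,m+2s)$. By Proposition~\ref{pr-4.26}, $\mu$ is already a co-chain map on the ambient complex $C^*(\Delta[V],\omega,m)$, that is $\omega\circ\mu=\mu\circ\omega$; since this identity is inherited by the $\omega$- and $\mu$-invariant subspaces, $\mu$ is the induced co-chain map in (\ref{eq-4.8999}).

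The step I expect to be the main obstacle is the well-definedness in part (i): one must verify carefully that, after passing to the quotient by the cyclic paths, the only surviving terms of $\omega(\sigma)$ are genuine cofaces $\tau\supseteq\sigma$, so that the defining upward-closure property of a co-simplicial complex can be applied. This is precisely dual to the observation in the proof of Theorem~\ref{th-4.1}~(i) that $\alpha$ sends a simplex to a combination of its faces, but here the bookkeeping of which inserted vertices create cyclic (hence vanishing) terms is slightly more delicate, and everything downstream—including the restriction argument in part (ii)—rests on it.
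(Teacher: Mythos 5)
Your proposal is correct and follows essentially the same route as the paper's own proof: reduce both parts to the well-definedness of $\omega$ (resp.\ $\mu$) on the graded subspace $C_{(n+\lambda)(2t+1)+q}(\mathcal{L};\mathbb{R})$, observe that in the quotient complex of Lemma~\ref{le-4.91} the co-boundary of a co-simplex is supported on its cofaces, invoke the upward-closure property of Definition~\ref{def-6}, and then appeal to Proposition~\ref{pr-4.26} for the co-chain-map identity $\omega\circ\mu=\mu\circ\omega$. If anything, you spell out the key ``observation'' (that inserted repeated vertices yield cyclic, hence vanishing, terms) more explicitly than the paper, which leaves it as ``a calculation of linear combinations.''
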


\begin{proof}
We  prove (i)  and (ii)  subsequently.

(i).  For each $n\geq 0$,  the vector space  $C_{(n+\lambda)(2t+1)+q}(\mathcal{L};\mathbb{R})$  is  a  subspace  of  the vector space  $C_{(n+\lambda)(2t+1)+q}(\Delta[V];\mathbb{R})$.
  Hence  in  order to prove  that   the  graded vector  space  (\ref{eq-dt-999})   equipped with  the  co-boundary  map  $\omega$   is  a  sub-co-chain  complex  of     $C^*(\Delta[V],\omega, m)$,   it  suffices to prove  that  the  map
\begin{eqnarray}\label{eq-wd-aazz}
\omega:~~~ C_{(n+\lambda)(2t+1)+q}(\mathcal{L};\mathbb{R})\longrightarrow C_{(n+1+\lambda)(2t+1)+q}(\mathcal{L};\mathbb{R})
\end{eqnarray}
is  well-defined for each  $n\geq 0$.  This follows from the observation that for any        $[(n+\lambda)(2t+1)+q]$-hyperedge 
\begin{eqnarray*}
  v_0v_1\ldots v_{(n+\lambda)(2t+1)+q} \in C_{(n+\lambda)(2t+1)+q}(\mathcal{L};R)
\end{eqnarray*}
    and any
    \begin{eqnarray*}
\omega=d u_1 \wedge  d u_2 \wedge \cdots \wedge  d  u_{2t+1}
\end{eqnarray*}
where    $u_1, u_2,\ldots, u_{2t+1}\in V$  and  $u_1\prec u_2\prec\cdots \prec u_{2t+1}$,     we  have
\begin{eqnarray*}
\omega  (v_0v_1\ldots v_{(n+\lambda)(2t+1)+q} ) \in C_{(n+1+\lambda)(2t+1)+q}(\mathcal{L};R).
\end{eqnarray*}
By  a  calculation of  linear combinations,   it follows that  the map  (\ref{eq-wd-aazz})   is  well-defined.   Therefore,    the  graded   vector space    (\ref{eq-dt-999})   equipped with the  co-boundary map  $\omega$   is  a  sub-co-chain  complex  of    $C^*(\Delta[V],\omega, m)$.

(ii).  Similar with the   verification that    the  map  $\omega$  in (\ref{eq-wd-aazz})   is well-defined for each $n\geq 0$,  we can prove that  the map
\begin{eqnarray*}
\mu:~~~ C_{(n+\lambda)(2t+1)+q}(\mathcal{L};\mathbb{R})\longrightarrow C_{(n+\lambda)(2t+1)+q+2s}(\mathcal{L};\mathbb{R})
\end{eqnarray*}
is  well-defined for each  $n\geq 0$.  Therefore,  with the help of (\ref{eq-4.82})  in Proposition~\ref{pr-4.26},   we  have that  $\mu$  gives a  co-chain map   in  (\ref{eq-4.8999}).
\end{proof}

\begin{definition}\label{def-4.22}
Let  $t\geq 0$.    Let $\omega\in  {\rm Ext}^{2t+1}(V)$.  Let  $m\in \mathbb{Z}$.   Suppose $m=\lambda(2t+1)+q$  where $\lambda\in \mathbb{Z}$  and  $0\leq q\leq 2t$.   Let $\mathcal{L}$  be a   independent hypergraph with its vertices in $V$.  For each $n\geq 0$,  we  define the  $n$-th  {\it  constrained  cohomology group}  $H^n(\mathcal{L},\omega, m)$  of $\mathcal{L}$  with respect to $\omega$ and $m$   to be the  cohomology group
\begin{eqnarray*}
H^n(\mathcal{L},\omega, m):&=&H^n(C^*(\mathcal{L},\omega, m))\\
&  =& \frac{{\rm Ker}\Big(\omega:  C_{(n+\lambda)(2t+1)+q}(\mathcal{L};\mathbb{R})\longrightarrow C_{(n+1+\lambda)(2t+1)+q}(\mathcal{L};\mathbb{R})\Big)}{ {\rm Im}\Big(\omega: C_{(n-1+\lambda)(2t+1)+q}(\mathcal{L};\mathbb{R})\longrightarrow C_{(n+\lambda)(2t+1)+q}(\mathcal{L};\mathbb{R})\Big) }
\end{eqnarray*}
of the  co-chain complex $C^*(\mathcal{L},\omega, m)$.
\end{definition}

The next theorem follows  from  Theorem~\ref{th-4.2}  and  Definition~\ref{def-4.22} immediately.

\begin{theorem}[Main Result  II]
\label{th-4.2h}
Let  $t,s\geq 0$  be  non-negative  integers.   Let  $m\in \mathbb{Z}$  be  non-negative  integers.    Suppose $m=\lambda(2t+1)+q$  where $\lambda\in \mathbb{Z}$  and  $0\leq q\leq 2t$.  Then for any $\omega\in {\rm Ext}^{2t+1}(V)$  and $\mu\in {\rm Ext}^{2s}(V)$,  there is an  induced homomorphism
\begin{eqnarray}\label{eq-722a}
\mu_*: ~~~H^n(\mathcal{L},\omega,m)\longrightarrow  H^n(\mathcal{L},\omega,m+2s), ~~~~~~  n\geq 0
\end{eqnarray}
of  the  constrained  cohomology groups.
\end{theorem}

\begin{proof}
Apply the  cohomology functor to the  co-chain complex in Theorem~\ref{th-4.2}~(i) and   the  co-chain map  in Theorem~\ref{th-4.2}~(ii).  We  obtain the homomorphism  $\mu_*$  of the  constrained  cohomology groups  in (\ref{eq-722a}).
\end{proof}

\smallskip

\section{Examples}\label{s5}

We  give some examples  for Theorem~\ref{th-4.1},  Theorem~\ref{th-4.1h},  Theorem~\ref{th-4.2}  and Theorem~\ref{th-4.2h}.

\begin{example}\label{ex-4.3.1}
Let $V$  be  any finite set.
Then  we  have the followings.
\begin{enumerate}[(i).]
\item
Any   element $\alpha\in {\rm Ext}_1(V)$  can be expressed  as
\begin{eqnarray}\label{eq-5.1}
\alpha=\sum_{v\in V}f(v)\frac{\partial}{\partial v}
\end{eqnarray}
for some function $f: V\longrightarrow\mathbb{R}$.  Let $\mathcal{K}$ be a simplicial complex with its vertices in $V$.  Then for any $n\geq 0$ and any $n$-simplex  $v_0v_1\ldots v_n$  in $\mathcal{K}$,  we  have
\begin{eqnarray*}
\alpha (v_0v_1\ldots v_n)&=&\sum_{v\in V}f(v)\frac{\partial}{\partial v} (v_0v_1\ldots v_n)\\
&=&\sum_{v\in V} f(v)\sum_{i=0}^n (-1)^i\delta (v,v_i) v_0\ldots \widehat{v_i} \ldots v_n\\
&=&\sum_{i=0}^n(-1)^i \Big(\sum_{v\in V} \delta(v,v_i) f(v)\Big)v_0\ldots \widehat{v_i} \ldots v_n\\
&=&\sum_{i=0}^n (-1)^i f(v_i) v_0\ldots \widehat{v_i} \ldots v_n.
\end{eqnarray*}
In \cite{chengyuan1,chengyuan2,chengyuan3},  the  $\alpha$ given in (\ref{eq-5.1})  is called  the {\it $f$-weighted boundary operator}  on $\mathcal{K}$ and  the $(\alpha,0)$-homology of $\mathcal{K}$   is denoted  as  the  weighted homology $H_*(\mathcal{K},f)$ of the weighted simplicial  complex $(\mathcal{K},f)$.  Particularly,  if $f$ takes the  constant value $1$ for all $v\in V$,  then $\alpha$  is the usual boundary operator $\partial_*$  given in (\ref{eq-usual-boundary})  and $H_*(\mathcal{K},f)$  is the usual homology  $H_*(\mathcal{K})$   (cf. \cite[Chapter~1]{eat}  and  \cite[Section~2.1]{hatcher})  of $\mathcal{K}$.
\item
Any   element $\omega\in {\rm Ext}^1(V)$  can be expressed  as
\begin{eqnarray}\label{eq-5.1co}
\omega=\sum_{v\in V}f(v) d v
\end{eqnarray}
for some function $f: V\longrightarrow\mathbb{R}$.  Let $\mathcal{L}$ be a  independent hypergraph with its vertices in $V$.  Then for any $n\geq 0$ and any $n$-hyperedge  $v_0v_1\ldots v_n$  in $\mathcal{L}$,  we  have
\begin{eqnarray*}
\omega (v_0v_1\ldots v_n)&=&\sum_{v\in V}f(v) d v  (v_0v_1\ldots v_n)\\
&=&\sum_{v\in V} f(v)\sum_{i=0}^{n+1}   (-1)^i v_0v_1\ldots v_{i-1} v v_i v_{i+1}\ldots v_{n}
\\
&=&\sum_{i=0}^{n+1}(-1)^i \Big(\sum_{v\in V}   f(v)  v_0\ldots \ldots v_{i-1} v v_i \ldots v_n\Big).
\end{eqnarray*}
Similar with (i),  we call the $\omega$ given in (\ref{eq-5.1co})    the {\it $f$-weighted  co-boundary operator}  on $\mathcal{L}$ and  denote  the $(\omega,0)$-cohomology of $\mathcal{L}$     as $H^*(\mathcal{L},f)$.  Particularly,  if $f$ takes the  constant value $1$ for all $v\in V$,  then we denote the $\omega$  as  $d_*$  denote the  $H^*(\mathcal{L},f)$  as  $H^*(\mathcal{L})$.
\end{enumerate}
\end{example}

\begin{example}\label{ex-5.2}
Let $V=\{v_0,v_1,v_2\}$.  Let $f: V\longrightarrow\mathbb{R}$  be a function on $V$.
\begin{enumerate}[(i).]
\item
 Let
\begin{eqnarray*}
\mathcal{K}=\{v_0,v_1,v_2,v_0v_1,v_0v_2,v_1v_2\}
\end{eqnarray*}
  be a simplicial complex  with its vertices in $V$.   Then we   have
  \begin{eqnarray*}
  C_0(\mathcal{K};\mathbb{R})&=& {\rm Span}_{\mathbb{R}}\{v_0,v_1,v_2\}, \\
C_1(\mathcal{K};\mathbb{R})&=& {\rm Span}_{\mathbb{R}}\{v_0v_1, v_0v_2, v_1v_2\},\\
        C_n(\mathcal{K};\mathbb{R})&=&0 {\rm~~~~ for ~all~ } n\geq 2.
  \end{eqnarray*}
  \begin{itemize}
  \item
  Let $t=1$.  Let
  \begin{eqnarray*}
  \alpha&=&\sum_{v\in V}  f(v)\frac{\partial}{\partial v}\\
  &=&f(v_0)\frac{\partial}{\partial v_0} + f(v_1)\frac{\partial}{\partial v_1} +f(v_2)\frac{\partial}{\partial v_2}.
  \end{eqnarray*}
 With the help of  Example~\ref{ex-4.3.1}~(i),   we  have
 \begin{eqnarray*}
 &\alpha (v_0)=\alpha(v_1)=\alpha(v_2)=0,\\
 &\alpha(v_0v_1)=f(v_0) v_1-f(v_1)v_0,\\
  &\alpha(v_0v_2)=f(v_0) v_2-f(v_2)v_0,\\
  &\alpha(v_1v_2)=f(v_1) v_2-f(v_2)v_1,\\
 &\alpha(v_0v_1v_2)= f(v_0)v_1v_2-f(v_1)v_0v_2+f(v_2)v_0v_1.
 \end{eqnarray*}
 Note that
 \begin{eqnarray*}
 \dim {\rm Ker}\Big(\alpha :  C_0(\mathcal{K};\mathbb{R})\longrightarrow 0\Big)= 3
 \end{eqnarray*}
 and
 \begin{eqnarray*}
 \dim {\rm Im}\Big( \alpha: C_1(\mathcal{K};\mathbb{R})\longrightarrow C_0(\mathcal{K};\mathbb{R})\Big)=  \begin{cases}
 2,  & {\rm~ if ~} f(v_i)\neq  0{\rm ~for ~some~}i=0,1,2;\\
 0,  &{\rm ~if ~} f(v_0)=f(v_1)=f(v_2)=0.
  \end{cases}
   \end{eqnarray*}
  Thus
  \begin{eqnarray*}
  H_0(\mathcal{K},f)=H_0(\mathcal{K},\alpha,0)= \begin{cases}
 \mathbb{R},  & {\rm~ if ~} f(v_i)\neq  0{\rm ~for ~some~}i=0,1,2;\\
 \mathbb{R}^3,  &{\rm ~if ~} f(v_0)=f(v_1)=f(v_2)=0.
  \end{cases}
  \end{eqnarray*}
  Note that
   \begin{eqnarray*}
 \dim {\rm Ker}\Big(\alpha : C_1(\mathcal{K};\mathbb{R})\longrightarrow C_0(\mathcal{K};\mathbb{R})\Big)=
 \begin{cases}
 1,   & {\rm~ if ~} f(v_i)\neq  0{\rm ~for ~some~}i=0,1,2;\\
 3,  &{\rm ~if ~} f(v_0)=f(v_1)=f(v_2)=0
  \end{cases}
 \end{eqnarray*}
and
\begin{eqnarray*}
 \dim {\rm Im}\Big( \alpha: C_2(\mathcal{K};\mathbb{R})\longrightarrow C_1(\mathcal{K};\mathbb{R})\Big)=0.
   \end{eqnarray*}
 Thus
  \begin{eqnarray*}
H_1(\mathcal{K},f)=  H_1(\mathcal{K},\alpha,0)= \begin{cases}
 \mathbb{R},  & {\rm~ if ~} f(v_i)\neq  0{\rm ~for ~some~}i=0,1,2;\\
 \mathbb{R}^3,  &{\rm ~if ~} f(v_0)=f(v_1)=f(v_2)=0.
  \end{cases}
  \end{eqnarray*}
  \item
Let $s=1$.   Let
\begin{eqnarray*}
\beta=b_{01} \frac{\partial}{\partial v_0}\wedge  \frac{\partial}{\partial v_1} + b_{02} \frac{\partial}{\partial v_0}\wedge  \frac{\partial}{\partial v_2} +b_{12} \frac{\partial}{\partial v_1}\wedge  \frac{\partial}{\partial v_2}.
\end{eqnarray*}
Then
\begin{eqnarray*}
\beta(v_i)=0 ~~~~~~{\rm ~for~} 0\leq  i\leq  2
\end{eqnarray*}
and
\begin{eqnarray*}
\beta(v_iv_j)=0 ~~~~~~{\rm ~for~} 0\leq  i<j\leq  2.
\end{eqnarray*}
Thus the induced homomorphism $\beta_*$  between the homology groups is identically zero.
\end{itemize}

\item
 Let
\begin{eqnarray*}
\mathcal{L}=\{v_0v_1, v_0v_2, v_0v_1v_2\}
\end{eqnarray*}
  be a independent hypergraph  with its vertices in $V$. Then we   have
  \begin{eqnarray*}
  C_0(\mathcal{L};\mathbb{R})&=&0, \\
C_1(\mathcal{L};\mathbb{R})&=& {\rm Span}_{\mathbb{R}}\{v_0v_1, v_0v_2\},\\
        C_2(\mathcal{L};\mathbb{R})&=&{\rm Span}_{\mathbb{R}}\{v_0v_1v_2\},\\
         C_n(\mathcal{L};\mathbb{R})&=& 0 {\rm~~~~ for ~all~ } n\geq 3.
  \end{eqnarray*}
  \begin{itemize}
  \item
   Let $t=1$.  Let
   \begin{eqnarray*}
   \omega&=&\sum_{v\in V}  f(v) dv\\
   &=& f(v_0) dv_0 + f(v_1) dv_1  + f(v_2) dv_2+  f(v_3)  dv_3.
   \end{eqnarray*}
    With the help of  Example~\ref{ex-4.3.1}~(ii),   we  have
 \begin{eqnarray*}
 &\omega (v_0v_1)= f(v_2) v_0v_1v_2,\\
  &\omega (v_0v_2)= -f(v_1) v_0v_1v_2,\\
 &\omega (v_0v_1v_2)= 0.
 \end{eqnarray*}
 Note that
 \begin{eqnarray*}
 \dim {\rm Ker}\Big(\omega :  C_1(\mathcal{L};\mathbb{R})\longrightarrow C_2(\mathcal{L};\mathbb{R})\Big)=
 \begin{cases}
 1,  & {\rm~ if ~} f(v_i)\neq  0{\rm ~for ~some~}i=1,2;\\
 2,  &{\rm ~if ~} f(v_1)=f(v_2)=0
  \end{cases}
 \end{eqnarray*}
 or equivalently,
  \begin{eqnarray*}
 \dim {\rm Im}\Big(\omega :  C_1(\mathcal{L};\mathbb{R})\longrightarrow C_2(\mathcal{L};\mathbb{R})\Big)=
 \begin{cases}
 1,  & {\rm~ if ~} f(v_i)\neq  0{\rm ~for ~some~}i=1,2;\\
 0,  &{\rm ~if ~} f(v_1)=f(v_2)=0.
  \end{cases}
 \end{eqnarray*}
 Thus
 \begin{eqnarray*}
 H^1(\mathcal{L},f)=H^1(\mathcal{L},\omega,0)=\begin{cases}
 \mathbb{R},  & {\rm~ if ~} f(v_i)\neq  0{\rm ~for ~some~}i=1,2;\\
 \mathbb{R}^2,  &{\rm ~if ~} f(v_1)=f(v_2)=0
  \end{cases}
  \end{eqnarray*}
  and
  \begin{eqnarray*}
 &H^2(\mathcal{L},f)=H^2(\mathcal{L},\omega,0)=\begin{cases}
 0,  & {\rm~ if ~} f(v_i)\neq  0{\rm ~for ~some~}i=1,2;\\
 \mathbb{R},  &{\rm ~if ~} f(v_1)=f(v_2)=0.
  \end{cases}
 \end{eqnarray*}
 Moreover,
  \begin{eqnarray*}
 &H^n(\mathcal{L},f)=H^n(\mathcal{L},\omega,0)= 0
 \end{eqnarray*}
 for any $n\neq  1,2$.

  \item
Let $s=1$.   Let
\begin{eqnarray*}
\mu=u_{01} d v_0 \wedge  d  v_1  + u_{02} d v_0 \wedge  d v_2 +u_{12} d v_1 \wedge  d v_2.
\end{eqnarray*}
Then
\begin{eqnarray*}
\mu(v_0v_1v_2)=mu(v_0v_1)=\mu(v_0v_2)=0.
\end{eqnarray*}
Thus the induced homomorphism $\mu_*$  between the cohomology groups is identically zero.
   \end{itemize}
    \end{enumerate}
\end{example}

\begin{example}
Let $V=\{v_0,v_1,v_2,v_3\}$.
   Let $t=1$.  Then any
 $\alpha \in {\rm Ext}_3(V)$  can be expressed  as
\begin{eqnarray*}
\alpha&=&  f(v_0,v_1,v_2)\frac{\partial}{\partial v_0}\wedge\frac{\partial }{\partial v_1}\wedge\frac{\partial}{\partial v_2}
+ f(v_0,v_1,v_3)\frac{\partial}{\partial v_0}\wedge\frac{\partial }{\partial v_1}\wedge\frac{\partial}{\partial v_3}\\
&&
 + f(v_0,v_2,v_3)\frac{\partial}{\partial v_0}\wedge\frac{\partial }{\partial v_2}\wedge\frac{\partial}{\partial v_3}
 + f(v_1,v_2,v_3)\frac{\partial}{\partial v_1}\wedge\frac{\partial }{\partial v_2}\wedge\frac{\partial}{\partial v_3}
\end{eqnarray*}
where
\begin{eqnarray*}
f: ~~~ V\times V\times V\longrightarrow \mathbb{R}
\end{eqnarray*}
  is a real function on the $3$-fold Cartesian product of $V$.  By   Proposition~\ref{le-2.1},  the adjoint $\omega\in {\rm Ext}^3(V)$ of $\alpha$  is  given by
\begin{eqnarray*}
\omega&=&-  f(v_0,v_1,v_2)dv_0 \wedge dv_1\wedge dv_2
- f(v_0,v_1,v_3) dv_0\wedge dv_1\wedge dv_3\\
&&
 - f(v_0,v_2,v_3)dv_0\wedge dv_2\wedge dv_3
 - f(v_1,v_2,v_3)dv_1\wedge dv_2\wedge dv_3.
\end{eqnarray*}
Let $s=1$.  Then any
 $\beta \in {\rm Ext}_2(V)$  can be expressed  as
\begin{eqnarray*}
\beta&=&  g(v_0,v_1)\frac{\partial}{\partial v_0}\wedge\frac{\partial }{\partial v_1}
+ g(v_0,v_2)\frac{\partial}{\partial v_0}\wedge\frac{\partial }{\partial v_2}+ g(v_0,v_3)\frac{\partial}{\partial v_0}\wedge\frac{\partial }{\partial v_3}\\
&&
+  g(v_1,v_2)\frac{\partial}{\partial v_1}\wedge\frac{\partial }{\partial v_2}
+ g(v_1,v_3)\frac{\partial}{\partial v_1}\wedge\frac{\partial }{\partial v_3}+ g(v_2,v_3)\frac{\partial}{\partial v_2}\wedge\frac{\partial }{\partial v_3}
\end{eqnarray*}
where
\begin{eqnarray*}
g: ~~~ V\times V \longrightarrow \mathbb{R}
\end{eqnarray*}
  is a real function on the $2$-fold Cartesian product of $V$.  By   Proposition~\ref{le-2.1},  the adjoint $\mu\in {\rm Ext}^2(V)$ of $\beta$  is  given by
\begin{eqnarray*}
\mu&=&-  g(v_0,v_1)  d v_0 \wedge   d v_1
-g(v_0,v_2) d  v_0 \wedge   d  v_2 - g(v_0,v_3) d v_0 \wedge   d v_3 \\
&&
-  g(v_1,v_2)  d v_1 \wedge   d v_2
- g(v_1,v_3)   d v_1 \wedge   d v_3 - g(v_2,v_3)  d v_2 \wedge  d v_3.
\end{eqnarray*}
Consider the   complete hypergraph
   \begin{eqnarray*}
   \Delta[V]&=&\{v_0,v_1,v_2,v_3, v_0v_1, v_0v_2,v_0v_3,v_1v_2,v_1v_3,v_2v_3, \\
   &&v_0v_1v_2, v_0v_1v_3, v_0v_2v_3, v_1v_2v_3, v_0v_1v_2v_3\}.
   \end{eqnarray*}
     Then  $\Delta[V]$  is    a  simplicial complex  and is also a independent hypergraph.
     \begin{itemize}
     \item
     By a  direct calculation,
\begin{eqnarray*}
\alpha (v_i)&=&0, ~~~  i=0,1,2,3,\\
 \alpha(v_i v_j)&=&0,  ~~~0\leq i<j\leq 3,\\
 \alpha(v_i v_j v_k)&=&0,~~~ 0\leq i<j<k\leq 3,\\
 \alpha(v_0v_1v_2v_3)&=& (-1)^{0+1+2} f(v_0,v_1,v_2) v_3 + (-1)^{0+1+3} f(v_0,v_1,v_3) v_2\\
 &&  + (-1)^{0+2+3}  f(v_0,v_2,v_3)  v_1 + (-1)^{1+2+3} f(v_1,v_2,v_3) v_0\\
 &=&- f(v_0,v_1,v_2) v_3 +   f(v_0,v_1,v_3) v_2
 -  f(v_0,v_2,v_3)  v_1  \\
 &&  +   f(v_1,v_2,v_3) v_0.
\end{eqnarray*}
It follows that
\begin{eqnarray*}
&&\dim {\rm Im}\Big(\alpha :  C_3(\Delta[V];\mathbb{R})\longrightarrow C_0(\Delta[V];\mathbb{R})\Big)\\
&=&
\begin{cases}
1, & {\rm~if~}f(v_i,v_j,v_k), 0\leq i<j<k\leq 3, {\rm~are ~not~all~zero}; \\
0, & {\rm~if~}f(v_i,v_j,v_k)=0 {\rm~for ~any~} 0\leq i<j<k\leq 3
\end{cases}
\end{eqnarray*}
or equivalently,
\begin{eqnarray*}
&&\dim {\rm Ker}\Big(\alpha :  C_3(\Delta[V];\mathbb{R})\longrightarrow C_0(\Delta[V];\mathbb{R})\Big)\\
&=&
\begin{cases}
0, & {\rm~if~}f(v_i,v_j,v_k), 0\leq i<j<k\leq 3, {\rm~are ~not~all~zero}; \\
1, & {\rm~if~}f(v_i,v_j,v_k)=0 {\rm~for ~any~} 0\leq i<j<k\leq 3.
\end{cases}
\end{eqnarray*}
 Consequently,
\begin{eqnarray*}
H_0(\Delta[V],\alpha,0)=
 \begin{cases}
3, & {\rm~if~}f(v_i,v_j,v_k), 0\leq i<j<k\leq 3, {\rm~are ~not~all~zero}; \\
4, & {\rm~if~}f(v_i,v_j,v_k)=0 {\rm~for ~any~} 0\leq i<j<k\leq 3
\end{cases}
\end{eqnarray*}
and
\begin{eqnarray*}
H_3(\Delta[V],\alpha,0)=
 \begin{cases}
0, & {\rm~if~}f(v_i,v_j,v_k), 0\leq i<j<k\leq 3, {\rm~are ~not~all~zero}; \\
1, & {\rm~if~}f(v_i,v_j,v_k)=0 {\rm~for ~any~} 0\leq i<j<k\leq 3.
\end{cases}
\end{eqnarray*}
By a similar calculation,   we  have
\begin{eqnarray*}
H_1(\Delta[V],\alpha,0)=\mathbb{R}^6,~~~~~~ H_2(\Delta[V],\alpha,0)=\mathbb{R}^4.
\end{eqnarray*}
Moreover,
\begin{eqnarray*}
H_n(\Delta[V],\alpha,0)=0
\end{eqnarray*}
for any $n\neq 0,1,2,3$.

\item
It is direct that
\begin{eqnarray*}
\beta\circ\alpha(v_i)=0
\end{eqnarray*}
for any $0\leq  i\leq 3$,
\begin{eqnarray*}
\beta\circ\alpha(v_iv_j)=0
\end{eqnarray*}
for any $0\leq  i<j\leq 3$,
\begin{eqnarray*}
\beta\circ\alpha(v_iv_jv_k)=0
\end{eqnarray*}
for any $0\leq  i<j<k\leq 3$,
and
\begin{eqnarray*}
\beta\circ\alpha(v_0v_1v_2v_3)=0.
\end{eqnarray*}
Therefore,  the  induced homomorphism $\beta_*$  between the homology groups  is the zero map.

\item
By  a direct calculation,
  \begin{eqnarray*}
  \omega(v_0)&=& -f(v_1,v_2,v_3) dv_1\wedge dv_2\wedge dv_3 (v_0)\\
  &=& f(v_1,v_2,v_3) v_0v_1v_2v_3,\\
  \omega(v_1)&=& -f(v_0,v_2,v_3) dv_0\wedge dv_2\wedge dv_3 (v_1)\\
  &=& -f(v_0,v_2,v_3)  v_0v_1v_2v_3,\\
  \omega(v_2)&=&- f(v_0,v_1,v_3) dv_0\wedge dv_1\wedge dv_3 (v_2)\\
  &=& f(v_0,v_1,v_3)  v_0v_1v_2v_3,\\
  \omega(v_3)&=&- f(v_0,v_1,v_2) dv_0\wedge dv_1\wedge dv_2 (v_3)\\
  &=& -f(v_0,v_1,v_2)  v_0v_1v_2v_3, \\
  \omega(v_iv_j)&=&0,~~~ 0\leq i<j\leq 3,\\
    \omega(v_iv_jv_k)&=&0,~~~ 0\leq i<j<k\leq 3,\\
    \omega(v_0v_1v_2v_3)&=&0.
   \end{eqnarray*}
   It follows that
\begin{eqnarray*}
&&\dim {\rm Im}\Big(\omega :  C_0(\Delta[V];\mathbb{R})\longrightarrow C_3(\Delta[V];\mathbb{R})\Big)\\
&=&
\begin{cases}
1, & {\rm~if~}f(v_i,v_j,v_k), 0\leq i<j<k\leq 3, {\rm~are ~not~all~zero}; \\
0, & {\rm~if~}f(v_i,v_j,v_k)=0 {\rm~for ~any~} 0\leq i<j<k\leq 3
\end{cases}
\end{eqnarray*}
or equivalently,
\begin{eqnarray*}
&&\dim {\rm Ker}\Big(\omega :  C_0(\Delta[V];\mathbb{R})\longrightarrow C_3(\Delta[V];\mathbb{R})\Big)\\
&=&
\begin{cases}
3, & {\rm~if~}f(v_i,v_j,v_k), 0\leq i<j<k\leq 3, {\rm~are ~not~all~zero}; \\
4, & {\rm~if~}f(v_i,v_j,v_k)=0 {\rm~for ~any~} 0\leq i<j<k\leq 3.
\end{cases}
\end{eqnarray*}
Consequently,
\begin{eqnarray*}
H^0(\Delta[V],\omega,0)=
 \begin{cases}
3, & {\rm~if~}f(v_i,v_j,v_k), 0\leq i<j<k\leq 3, {\rm~are ~not~all~zero}; \\
4, & {\rm~if~}f(v_i,v_j,v_k)=0 {\rm~for ~any~} 0\leq i<j<k\leq 3
\end{cases}
\end{eqnarray*}
and
\begin{eqnarray*}
H^3(\Delta[V],\omega,0)=
 \begin{cases}
0, & {\rm~if~}f(v_i,v_j,v_k), 0\leq i<j<k\leq 3, {\rm~are ~not~all~zero}; \\
1, & {\rm~if~}f(v_i,v_j,v_k)=0 {\rm~for ~any~} 0\leq i<j<k\leq 3.
\end{cases}
\end{eqnarray*}
By a similar calculation,   we  have
\begin{eqnarray*}
H^1(\Delta[V],\omega,0)=\mathbb{R}^6,~~~~~~ H^2(\Delta[V],\omega,0)=\mathbb{R}^4.
\end{eqnarray*}
Moreover,
\begin{eqnarray*}
H^n(\Delta[V],\omega,0)=0
\end{eqnarray*}
for any $n\neq 0,1,2,3$.

\item
   It is direct to see that  the  induced homomorphism $\mu_*$  between the cohomology groups  is the zero map.
   \end{itemize}
\end{example}

 \smallskip

\section*{Acknowledgement} {The  author would like to express his  deep
gratitude to the referee for the careful reading of the manuscript.}

 \smallskip

 \smallskip
 {\small
Shiquan Ren

Address:   School of Mathematics and Statistics,  Henan University,  Kaifeng  475004,  China.

E-mail:  renshiquan@henu.edu.cn}

\end{document}